\theoremstyle{plain}
\newtheorem*{theoA}{Theorem A}
\newtheorem*{theoB}{Theorem B}
\newtheorem*{theoC}{Theorem C}
\newtheorem*{theoD}{Theorem D}
\newtheorem*{theoE}{Theorem E}
\newtheorem*{theoF}{Theorem F}
\newtheorem*{theoG}{Theorem G}
\newtheorem{theo}{Theorem}[section]
\newtheorem{lem}{Lemma}[section]
\theoremstyle{definition}
\newtheorem{exm}{Example}[section]
\newtheorem{ques}{Question}[section]
\newtheorem{defi}{Definition}[section]
\theoremstyle{remark}
\newtheorem{rem}{Remark}[section]
\newcommand{\ol}{\overline}
\newcommand{\be}{\begin{equation}}
\newcommand{\ee}{\end{equation}}
\newcommand{\beas}{\begin{eqnarray*}}
	\newcommand{\eeas}{\end{eqnarray*}}
\newcommand{\bea}{\begin{eqnarray}}
\newcommand{\eea}{\end{eqnarray}}
\numberwithin{equation}{section}
\renewcommand{\leq}{\leqslant}
\renewcommand{\geq}{\geqslant}
\renewcommand{\setminus}{\smallsetminus}
\title[Class of meromorphic functions partially shared values ...]{\LARGE
	Class of meromorphic functions partially shared values with their
	differences or shifts}
\subjclass[2010]{ Primary 30D35.}
\keywords{ meromorphic function, uniqueness problems, Partially shared values, shift or difference operator, periodic function.}
\numberwithin {equation}{section}
\date{}
\author{Molla Basir Ahamed}
\address{ Molla Basir Ahamed, Department of Mathematics, Kalipada Ghosh Tarai Mahavidyalya, West Bengal, 734014, India.}
\email{basir\_math\_kgtm@yahoo.com, bsrhmd117@gmail.com.}
\address{ Present Address: Molla Basir Ahamed, Department of Mathematics, Jadavpur University, Kolkata-700032, West Bengal, India.}
\email{mbahamed.math@jadavpuruniversity.in}
\begin{document}
	\vspace{18mm} \setcounter{page}{1} \thispagestyle{empty}
	
\begin{abstract}
Two meromorphic functions $ f $ and $ g $ are said to share a value $ s\in\mathbb{C}\cup\{\infty\} $ $ CM $ $ (IM) $ provided that $ f(z)-s $ and $ g(z)-s $ have the same set of zeros counting multiplicities (ignoring multiplicities). We say that a meromorphic function $ f $ share $ s\in\mathbb{C} $
partially $ CM $ with a meromorphic function $ g $ if $ E(s,f)\subseteq  E(s,g)$. It is easy to see that the condition ``partially shared values $ CM $" is more general than the condition ``shared value $ CM $". With the idea of partially shared values, in this paper, we prove some uniqueness results between non-constant meromorphic functions and their shifts or generalized differences. We exhibit some examples to show that the result of {Charak \emph{et al.}} \cite{Cha & Kor & Kum-2016} is not true for $k=2 $ or $ k=3 $. We find some gaps in proof of the result of {Lin} \emph{et al.} \cite{Lin & Lin & Wu}, and we not only correct them but also generalize their result in a more convenient way. A number of examples have been exhibited to validate certain claim of the main results of this paper and also to show that some of the conditions are sharp. In the end, we have posed some open questions for further investigation of the main result of the paper.   	
\end{abstract}
	\maketitle
	
\section{Introduction}
	We assume that the reader is familiar with the elementary Nevanlinna theory,	for detailed information, we refer the reader \cite{Goldberg,Hay-1964,Laine-1993}. Meromorphic functions considered in this paper are always non-constant, unless otherwise specified.
	For such a function $f$ and $a\in\mathbb{\ol C}=:\mathbb{C}\cup\{\infty\}$, each  $z$ with $f(z)=a$ will be called $a$-point of $f$.  We will use  here some standard definitions and basic notations from this theory. In particular by $N(r,a;f)$ ($\ol N(r,a;f)$), we denote the counting function (reduced counting function) of $a$-points of meromorphic functions $f$, $T(r,f)$ is the Nevanlinna	characteristic function of $f$ and $S(r,f)$ is used to denote each functions which is of smaller order than $T(r,f)$ when $r\rightarrow \infty$.  We also denote $ \mathbb{C^{*}} $ by $\mathbb{C^{*}}:=\mathbb{C}\setminus\{0\}$. \vspace{1mm}
	
	\par For a meromorphic function $ f $, the order $ \rho(f) $
	and the hyper order $ \rho_2(f) $ of $ f $ are defined respectively by
	\beas
	\rho(f)=\limsup_{r\rightarrow\infty}\frac{\log^{+}T(r,f)}{\log
		r}\;\;\mbox{and}\;\;\rho_2(f)=\limsup_{r\rightarrow\infty}\frac{\log^{+}\log^{+}T(r,f)}{\log
		r}. \eeas
	\par For $ a\in\mathbb{C}\cup\{\infty\} $, we also define \beas
	\Theta(a;f)=1-\limsup_{r\rightarrow +\infty}\frac{\ol
		N\left(r,{1}/{(f-a)}\right)}{T(r,f)}.\eeas
	\par We denote $ \mathcal{S}(f) $ as the family of all meromorphic functions $ s $	for which $ T(r,s)=o(T(r,f)) $, where $ r\rightarrow\infty $ outside of a possible exceptional set of finite logarithmic measure. Moreover, we also include all constant functions in $ \mathcal{S}(f) $, and let $\hat{\mathcal{S}}(f)=\mathcal{S}(f)\cup\{\infty\} $. For $
	s\in\hat{\mathcal{S}}(f) $, we say that two meromorphic functions $ f $ and $ g$ share $ s $ $ CM $ when $ f(z)-s $ and $ g(z)-s $ have the same zeros with the same multiplicities. If multiplicities are not taking into account, then we say that $ f $ and $ g $ share $ s $ $ IM $.\vspace{1mm}
	\par In addition, we denote $ \ol E(s,f) $ by the set of zero of $ f-s $, where a zero	is counted only once in the set, and by the set $\ol  E_{k)}(s,f) $, we understand a set of zeros of $ f-s $ with multiplicity $ p \leq k $, where a zero with
	multiplicity $ p $ is counted only once in the set. Similarly, we denote the reduced counting function corresponding to $ \ol E_{k)}(s,f) $ as $
	\ol N_{k)}\left(r,1/(f-s)\right) $.\vspace{1mm}
	
	\par In the uniqueness theory of meromorphic functions, the the famous classical results  are the five-point, resp. four-point, uniqueness theorems due to Nevanlinna \cite{Nevanlinna-1929}. The five-point theorems states that if two meromorphic functions $f$, $g$
	share five distinct values in the extended complex plane $IM$, then $f\equiv g$. The beauty of this
	result lies in the fact that there is no counterpart of this result in case of real	valued functions. On the other hand, four-point theorem states that if two meromorphic functions $f,\;g$ share four
	distinct values in the extended complex plane $CM$, then
	$f\equiv T\circ g$, where $T$ is a M$\ddot{o}$bius transformation.\vspace{1mm}
	
	\par Clearly, these	results initiated the study of uniqueness of two meromorphic functions $f$ and $g$. The study of such uniqueness theory becomes more interesting if the function $g$ has some expressions in terms of	$f$.\vspace{1mm}
	
	\par Next we explain the following definition which will be required in the sequel.
	\begin{defi}
		Let $f$ and $g$ be two meromorphic functions such that $f$ and $g$ share the
		value $a$ with weight $k$ where $a\in\mathbb{C}\cup\{\infty\}$. We denote by
		$\ol N_E^{(k+1}\left(r,1/(f-a)\right)$ the counting function
		of those $a$-points of $f$ and $g$ where $p=q\geq k+1$, each point in this
		counting function counted only once.  
	\end{defi}\par 
	In what follows, let $c$ be a non-zero constant. For a meromorphic
	function $f$, let us denote its shift $I_{c}f$ and difference operators
	$\Delta_{c}f$,  respectively, by $I_{c}f(z)=f(z+c)$ and
	$\Delta_{c}f(z)=(I_{c}-1)f(z)=f(z+c)-f(z).$\vspace{1mm} 
	
	\par Recently an increasing amount of
	interests have been found among the researchers to find results which are the difference analogue of Nevanlinna theory. For finite ordered meromorphic	functions, {Halburd and Korhonen} \cite{Hal & Kor-JMMA-2006},  and  {Chiang and Feng} \cite{Chi & Fen-2008} developed independently parallel difference version of the famous Nevanlinna theory. As applications of this theory, we refer the reader to see the articles in case of set sharing problems (see, for example \cite{Ahamed-SUBB-2019,Aha-TJA-2021,Ban-Aha-Filomat-2019,Ban-Aha-MS-2020,Che-Che-BMMSS-2012,Zha-JMMA-2010}), finding solutions to the Fermat-type difference equations (see e.g. \cite{Aha-JCMA-2021,Liu-AM-2012,Cao-MJM-2018}), Nevanlinna theory of the Askey–Wilson divided difference operators (see e.g. \cite{Chiang-Feng-AM-2010}), meromorphic solutions to the difference equations of Malmquist type (see e.g. \cite{Lu-BAMS-2016}) and references therein.\vspace{1mm}
	
	\par Regarding periodicity of meromorphic functions, {Heittokangas} et. al. \cite{Hei & Kor & Lai & Rie-CVTA-2001,Hei & Kor &
		 Lai & Rie-JMMA-2009} have considered the problem of value sharing for shifts of meromorphic functions and obtained the following result.
	\begin{theoA}\cite{Hei & Kor & Lai & Rie-CVTA-2001}
		Let $ f $ be a meromorphic function of finite order, and let $
		c\in\mathbb{C^{*}} $. If $ f(z) $ and $ f(z+c) $ share three distinct periodic
		functions $ s_1, s_2, s_3\in\hat{\mathcal{S}}(f) $ with period $ c $ $ CM $,
		then $ f(z)\equiv f(z+c) $ for all $ z\in\mathbb{C} $.
	\end{theoA}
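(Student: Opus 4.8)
The plan is to reduce the theorem to the three constant targets $0,1,\infty$ and then exploit the difference analogue of the lemma on the logarithmic derivative, which is exactly the place where the finite-order hypothesis does its work.

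First I would normalise the shared targets. Writing $g(z)=f(z+c)$ and using that $s_1,s_2,s_3$ are $c$-periodic small functions, I would introduce the M\"obius transformation $L(w)=\frac{(w-s_1)(s_2-s_3)}{(w-s_3)(s_2-s_1)}$, whose coefficients are $c$-periodic and which sends $s_1,s_2,s_3$ to $0,1,\infty$ respectively. Setting $F=L\circ f$, the periodicity $s_i(z+c)=s_i(z)$ forces $L\circ g=F(z+c)$, so $F$ and $F(z+c)$ share $0,1,\infty$ $CM$. Since the $s_i$ are small and of finite order, $T(r,F)=T(r,f)+S(r,f)$; hence $F$ is again non-constant of finite order and $S(r,F)=S(r,f)$. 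It therefore suffices to prove $F(z+c)\equiv F(z)$, because $F$ cannot be constant (a constant $F$ would express $f$ rationally through $s_1,s_2,s_3$, giving $T(r,f)=S(r,f)$, which is absurd).

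Next I would form the two quotients $u=\frac{F(z+c)}{F(z)}$ and $v=\frac{F(z+c)-1}{F(z)-1}$. Because $F$ and $F(z+c)$ share $0$ and $\infty$ $CM$, every pole of the numerator of $u$ is cancelled by a pole of the denominator and every zero of the numerator by a zero of the denominator, so $u$ has neither zeros nor poles; the sharing of $1$ and $\infty$ $CM$ gives the same for $v$ (applied to $F-1$, whose shift is $F(z+c)-1$). Thus $N(r,u)=N(r,v)=0$, and by the difference form of the logarithmic derivative lemma for finite-order functions (Halburd--Korhonen, Chiang--Feng) I would conclude $T(r,u)=m(r,u)=S(r,F)$ and likewise $T(r,v)=S(r,F)$. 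This growth estimate is the main obstacle and the only point at which finiteness of the order is genuinely needed: without it the difference logarithmic derivative lemma may fail and the entire argument collapses.

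Finally I would combine $F(z+c)=uF$ with $F(z+c)-1=v(F-1)$ to obtain $(u-v)F=1-v$. If $u\not\equiv v$, then $F=\frac{1-v}{u-v}$, whence $T(r,F)\le T(r,u)+T(r,v)+O(1)=S(r,F)$, impossible for a non-constant $F$. Hence $u\equiv v$, and cross-multiplying $\frac{F(z+c)}{F}=\frac{F(z+c)-1}{F-1}$ collapses immediately to $F(z+c)\equiv F(z)$, that is, $f(z+c)\equiv f(z)$ for all $z\in\mathbb{C}$, as required.
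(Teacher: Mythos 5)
The paper never proves Theorem A: it is quoted as background from Heittokangas \emph{et al.}, so there is no in-paper proof to compare you against. Judged on its own terms, your argument is essentially the standard proof of this result from the literature: normalize the three targets to $0,1,\infty$, form the quotients $u=F(z+c)/F(z)$ and $v=(F(z+c)-1)/(F(z)-1)$, use the CM sharing to control their counting functions, invoke the Halburd--Korhonen/Chiang--Feng difference analogue of the logarithmic derivative lemma (Lemma 3.4 of this paper) to get $T(r,u)+T(r,v)=S(r,F)$, and then exploit the identity $(u-v)F=1-v$ to force $u\equiv v$, which collapses to $F(z+c)\equiv F(z)$. The outline, the algebra, and the location where finite order is genuinely used are all correct.

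Two technical points should be repaired, though neither destroys the proof. First, your formula for $L$ tacitly assumes $s_1,s_2,s_3$ are all finite, whereas the theorem allows one of them to be $\infty$; that case needs the degenerate Möbius map (e.g.\ $L(w)=(w-s_1)/(s_2-s_1)$ when $s_3=\infty$). Second, and more substantively, when the $s_i$ are genuinely non-constant small functions, $F=L\circ f$ and its shift share $0,1,\infty$ CM only \emph{up to} exceptional points arising from zeros and poles of the $s_i$ and from zeros of $s_i-s_j$; at such points $u$ and $v$ can in fact have zeros or poles, so your claim that they have none is literally false. The repair is routine: those exceptional points contribute at most $S(r,f)$ to $N(r,u)+N(r,1/u)$ and to the corresponding quantity for $v$, so one still gets $T(r,u)+T(r,v)=S(r,F)$, which is all the final contradiction $T(r,F)\leq T(r,u)+2T(r,v)+O(1)=S(r,F)$ requires. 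You should also remark that $F$ inherits finite order from $f$ (small functions of a finite-order function are themselves of finite order), since the difference lemma is applied to $F$ rather than to $f$ itself.
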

	\par In $ 2009 $, Heittokangas \emph{et al.} \cite{Hei & Kor & Lai & Rie-JMMA-2009}  improved {Theorem A} by replacing ``sharing three small functions $ CM $" by ``$ 2\; CM + 1\; IM $" and obtained the following result.
	\begin{theoB}\cite{Hei & Kor & Lai & Rie-JMMA-2009}
		Let $ f $ be a meromorphic function of finite order, and let $
		c\in\mathbb{C^{*}} $. Let $ s_1, s_2, s_3\in\hat{\mathcal{S}}(f) $ be three
		distict periodic function with period $ c $. If $ f(z) $ and $ f(z+c) $ share  $
		s_1, s_2\in\hat{\mathcal{S}}(f) $  $ CM $ and $ s_3 $ $ IM $, then $ f(z)\equiv
		f(z+c) $ for all $ z\in\mathbb{C} $.
	\end{theoB}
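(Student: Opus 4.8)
The plan is to follow the classical three--value strategy, but to exploit the difference logarithmic derivative lemma of Chiang--Feng and Halburd--Korhonen to force the relevant auxiliary functions to be small. Write $g(z)=f(z+c)$. Since the targets $s_1,s_2,s_3$ are periodic of period $c$, the M\"obius transformation $T(w)=\frac{(w-s_1)(s_3-s_2)}{(w-s_2)(s_3-s_1)}$ has period-$c$ coefficients, so $T\circ f$ and $(T\circ f)(z+c)=T\circ g$ inherit the shift relation and the sharing hypotheses, while $T(r,T\circ f)=T(r,f)+S(r,f)$. First I would therefore normalize $s_1=0$, $s_2=\infty$, $s_3=1$, so that $f$ and $g$ share $0,\infty$ $CM$ and $1$ $IM$.

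Next, from the $CM$ sharing of $0$ and $\infty$ the quotient $g/f$ is a zero-free and pole-free meromorphic function of finite order, hence $g/f=e^{h}$ with $h$ a polynomial. The decisive point is that $g/f=f(z+c)/f(z)$, so the difference logarithmic derivative lemma gives $m(r,g/f)=S(r,f)$; since $e^{h}$ has no poles this yields $T(r,e^{h})=S(r,f)$, i.e. $e^{h}\in\mathcal{S}(f)$. The same lemma applied to $F=f-1$ gives $m\left(r,\frac{g-1}{f-1}\right)=m\left(r,\frac{F(z+c)}{F(z)}\right)=S(r,f)$.

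Now I would argue by contradiction and assume $e^{h}\not\equiv1$. At every $1$-point of $f$ the $IM$ hypothesis gives $f=g=1$, hence $e^{h}=g/f=1$ there, so every $1$-point of $f$ is a zero of the small function $e^{h}-1$; consequently $\ol N(r,1;f)=S(r,f)$. Combining this with the previous proximity estimate, once the contribution of the \emph{multiple} $1$-points is shown to be $S(r,f)$ as well, the zeros and poles of $(g-1)/(f-1)$ (which sit only at $1$-points where the multiplicities of $f-1$ and $g-1$ disagree) become negligible, so $(g-1)/(f-1)=e^{k}$ is itself a small function. Then the algebraic elimination of the three-$CM$ case applies: from $g=e^{h}f$ and $g-1=e^{k}(f-1)$ one obtains $(e^{h}-e^{k})f=1-e^{k}$; if $e^{h}\not\equiv e^{k}$ then $f$ would be a quotient of small functions and hence small, which is impossible, while $e^{h}\equiv e^{k}$ forces $e^{h}\equiv1$. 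Either way $e^{h}\equiv1$, that is $f(z)\equiv f(z+c)$.

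The main obstacle is precisely the step that separates Theorem B from Theorem A, namely upgrading the $IM$ information at the $1$-points. The difference logarithmic derivative lemma controls only the proximity function $m\left(r,\frac{g-1}{f-1}\right)$, whereas the $IM$ hypothesis permits the multiplicities of $f-1$ and $g-1$ to differ, so the counting term of $(g-1)/(f-1)$ need not vanish a priori. I expect to handle this by a Second Main Theorem and deficiency estimate: since the set $\{f=1\}$ and the set $\{f=e^{-h}\}$ (the latter being the $1$-points of $g$) are both confined to the zero set of $e^{h}-1$, both $1$ and the small target $e^{-h}$ are almost totally deficient for $f$; playing these two deficient small targets against the $CM$-shared targets $0,\infty$, together with a bound on the multiple $1$-points via a Gundersen-type auxiliary function $\frac{f''}{f'}-\frac{2f'}{f-1}-\frac{g''}{g'}+\frac{2g'}{g-1}$, should pin down $N(r,1;f)=S(r,f)$ and thereby close the argument.
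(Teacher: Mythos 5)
A preliminary remark: this paper does not prove Theorem~B at all --- it is quoted, with citation, from Heittokangas, Korhonen, Laine and Rieppo as background for the results the paper does prove --- so your proposal can only be assessed on its own terms, not against an in-paper argument. The steps you actually carry out are correct and standard: the quasi-M\"obius normalization with period-$c$ small coefficients, the identity $f(z+c)/f(z)=e^{h}=:\beta$ with $T(r,\beta)=S(r,f)$ from the difference analogue of the logarithmic derivative lemma (Lemma~\ref{lem3.4}), the observation that if $\beta\not\equiv 1$ then every $1$-point of $f$ is a zero of $\beta-1$, whence $\overline N\left(r,\frac{1}{f-1}\right)=S(r,f)$, and the final elimination, which is sound algebra once $(g-1)/(f-1)$ is known to be small, where $g:=f(z+c)$. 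The genuine gap is exactly the step you flag and postpone: showing that the multiplicity discrepancy between $f-1$ and $g-1$ at the (few) common $1$-points is $S(r,f)$. That step is not a technical refinement to be supplied by routine tools; it is the entire content of the theorem, and neither device you propose can deliver it. The deficiency route cannot work because you are sitting at the extremal case of the defect relation: assuming $\beta\not\equiv1$ you get $\Theta(1;f)=\Theta(1/\beta;f)=1$, and the defect relation for small targets (Lemma~\ref{lem3.5}) then forces $\Theta(0;f)=\Theta(\infty;f)=0$, the four defects summing to exactly $2$; this is perfectly self-consistent, produces no contradiction, and --- since defects are built from \emph{reduced} counting functions, which you already know are small --- is structurally blind to the multiplicities you must control. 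The Gundersen-type function $\psi=\frac{f''}{f'}-\frac{2f'}{f-1}-\frac{g''}{g'}+\frac{2g'}{g-1}$ fails twice over: its poles at zeros of $f'$ and of $g'$ away from the shared values do not cancel (a generic zero of $f'$ is not matched by one of $g'$, since $g'=\beta'f+\beta f'$), so $T(r,\psi)$ is not $S(r,f)$; and even if $\psi$ were small, it has only \emph{simple} poles at discrepant $1$-points, so it could at best recover the reduced counting of such points --- already known to be small --- never the weighted sum $\sum|p-q|$ that you need.

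In fact the discrepancy is provably large, which makes the size of the gap concrete. Work in your normalization and assume $\beta\not\equiv1$. Put $F=\frac{f-1}{g-1}=\frac{f-1}{\beta f-1}$. This is a M\"obius transformation of $f$ with small coefficients and determinant $\beta-1\not\equiv0$, so Lemma~\ref{lem3.2} gives $T(r,F)=T(r,f)+S(r,f)$, while the difference lemma applied to $f-1$ (in the direction $-c$) gives $m(r,F)=S(r,f)$. Poles of $f$ and $g$ cancel in $F$ because they are shared CM with equal multiplicities, so every pole of $F$ is a common $1$-point at which $q:=\mathrm{ord}(g-1)$ exceeds $p:=\mathrm{ord}(f-1)$, the pole having order $q-p$. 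Hence
\[
\sum_{q>p}(q-p)\;=\;N(r,F)\;=\;T(r,F)-m(r,F)+O(1)\;=\;T(r,f)+S(r,f),
\]
the sum running over the common $1$-points in $|z|\le r$. So under your standing assumption the quantity you hope to prove negligible is automatically comparable to $T(r,f)$: establishing $N\left(r,\frac{1}{f-1}\right)=S(r,f)$ is logically equivalent to completing the proof by contradiction, i.e.\ to the theorem itself. Any correct argument (including the one in the cited source) must confront precisely this configuration --- a set of $1$-points of reduced counting $S(r,f)$ but enormous total multiplicity --- and that requires ideas beyond the defect bookkeeping and the auxiliary function you sketch.
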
\par 
	
In $ 2014 $, {Halburd} \emph{et al.} \cite{Hal & Kor &
Toh-TAMS-2014} extended some results in this direction to meromorphic functions $ f $  whose hyper-order $ \rho_2(f)$ less than one. One may get much more information from \cite{Aha-JCMA-2021,Aha-TJA-2021,Che & Lin-2016,Hei & Kor & Lai & Rie-CVTA-2001,Hei & Kor & Lai &
		Rie-JMMA-2009,Liu-JMMA-2009,Liu & Yan-AM-2009} and the references therein, about the relationship between a
	meromorphic function $ f(z) $ and it shift $ f(z+c) $.\vspace{1mm}
	
	\par In $ 2016 $, {Li and Yi} \cite{Li & Yi-BKMS-2016} obtained a uniqueness	result of meromorphic functions $ f $ sharing four values with their shifts $	f(z+c) $.
	\begin{theoC}\cite{Li & Yi-BKMS-2016}
		Let $ f $ be a non-constant meromorphic function of hyper-order $ \rho_2(f)<1 $
		and $ c\in\mathbb{C^{*}} $. Suppose that $ f $ and $ f(z+c) $ share $ 0$, $1$, 
		$\eta $ $ IM $, and share $ \infty $ $ CM $, where $ \eta $ is a finite value
		such that $ \eta\neq 0, 1 $. Then $ f(z)\equiv f(z+c) $ for all $ z\in\mathbb{C}
		$.
	\end{theoC}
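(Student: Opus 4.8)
The plan is to set $g(z):=f(z+c)$ and to exploit the difference version of Nevanlinna theory, which is available here precisely because $\rho_2(f)<1$. First I would record the shift-invariance estimates: since $\rho_2(f)<1$, the difference analogue of the lemma on the logarithmic derivative (in the hyper-order form used by Halburd \emph{et al.}) gives $m(r,g/f)=S(r,f)$ and $m(r,f/g)=S(r,f)$, while the fact that $f$ and $g$ share $\infty$ $CM$ yields $N(r,g)=N(r,f)$, so that $T(r,g)=T(r,f)+S(r,f)$ and $S(r,g)=S(r,f)=:S(r)$. Thus $f$ and $g$ are interchangeable up to small terms; this is the input that replaces the ``finite order'' hypothesis of the classical four–value theory and lets the value-distribution machinery run symmetrically in $f$ and $g$.

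Next I would apply the second main theorem to $f$ (and symmetrically to $g$) for the four targets $0,1,\eta,\infty$, obtaining
\[
2T(r,f)\le \ol N(r,0;f)+\ol N(r,1;f)+\ol N(r,\eta;f)+\ol N(r,\infty;f)+S(r).
\]
Because all four values are shared (three $IM$, one $CM$), the reduced counting functions on the right are common to $f$ and $g$, so the total ramification budget is saturated. The main technical step, and the place where I expect the genuine difficulty, is to convert this saturation into rigidity: the slack in the second main theorem must be $S(r)$, which forces the differences $N(r,a;f)-\ol N(r,a;f)$ at the shared $0,1,\eta$–points to be absorbed into $S(r)$, i.e. the three finite values are in fact shared $CM$. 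Upgrading $IM$ sharing to $CM$ from only a single given $CM$ value is the delicate point; it is exactly the shift structure $T(r,g)=T(r,f)+S(r)$ together with the difference log-derivative estimates that keeps this bookkeeping tight. Once this is achieved, $f$ and $g$ share all four values $CM$ and Nevanlinna's four–value theorem gives $g=T\circ f$ for some M\"obius transformation $T$.

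The last paragraph is the clean rigidity finish, and here the hypothesis $\rho_2(f)<1$ does the decisive work. Suppose $T\neq\mathrm{id}$. For any shared value $a$ not fixed by $T$ one has $\{f=a\}=\{g=a\}=\{f=T^{-1}(a)\}$ with $T^{-1}(a)\neq a$; since $f$ is single-valued these two sets are disjoint and equal, hence empty, so $a$ is a Picard exceptional value of $f$ (and, by the sharing, of $g$). As $T\neq\mathrm{id}$ fixes at most two points of $\ol{\mathbb{C}}$ and a non-constant meromorphic function omits at most two values, exactly two of $0,1,\eta,\infty$, say $u,v$, are omitted, and $T$ must be the involution swapping $u,v$ and fixing the other two. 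Choosing a M\"obius $S$ with $S(u)=0$ and $S(v)=\infty$ and writing $F:=S(f)=e^{h}$ with $h$ entire (since $F$ omits $0$ and $\infty$), the relation $g=T(f)$ becomes $F(z+c)=\lambda/F(z)$ for a constant $\lambda$, i.e. $h(z+c)+h(z)$ is constant, and therefore $h(z+2c)=h(z)$. But a non-constant $2c$-periodic entire function has order at least one, so $\rho_2(f)=\rho_2(e^{h})=\rho(h)\ge 1$, contradicting $\rho_2(f)<1$. Hence $T=\mathrm{id}$, that is $g\equiv f$, which is exactly $f(z)\equiv f(z+c)$ for all $z\in\mathbb{C}$.
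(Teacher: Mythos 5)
First, a structural point: the paper you are working from does not prove Theorem C at all; it is quoted as a known result of Li and Yi \cite{Li & Yi-BKMS-2016}, so your attempt has to stand on its own. Your opening paragraph (the difference analogue of the logarithmic derivative lemma giving $m(r,g/f)+m(r,f/g)=S(r,f)$, $N(r,g)=N(r,f)$ from the $CM$-sharing of $\infty$, hence $T(r,g)=T(r,f)+S(r)$) is sound, and your closing rigidity paragraph is correct and nicely done: \emph{granted} that $f$ and $g=f(z+c)$ share all four values $CM$, the deduction that a non-identity M\"obius $T$ forces exactly two omitted values which $T$ swaps, and that writing $S(f)=e^{h}$ yields $h(z+c)+h(z)=\mathrm{const}$, hence $h(z+2c)=h(z)$ and $\rho_2(f)=\rho(h)\geq 1$, is exactly the kind of argument used elsewhere in this paper (compare Lemma \ref{lem3.3} and the proof of Theorem \ref{th2.3}).

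The genuine gap is your middle step, and it is fatal as written. You claim that saturation of the second main theorem forces $N(r,a;f)-\ol N(r,a;f)=S(r)$ at the three $IM$-shared values, and hence that the sharing is actually $CM$. This is false as a general principle: Gundersen's classical counterexample $f=(e^z+1)/(e^z-1)^2$, $g=(e^z+1)^2/\left(8(e^z-1)\right)$ shares the four values $0$, $1$, $-1/8$, $\infty$ $IM$, saturates the second main theorem in the sense that $2T(r,f)=\sum_j \ol N\left(r,1/(f-a_j)\right)+S(r)$, and yet at \emph{every} shared point the multiplicity pair is $(1,2)$ or $(2,1)$, so the multiple points have full density, no $CM$ upgrade is possible, and $f$ is not a M\"obius image of $g$. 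Your bookkeeping uses nothing about the shift beyond $T(r,g)=T(r,f)+S(r)$, so it would apply verbatim to this example; it therefore cannot be correct. Even taking into account your one $CM$ value, the implication you are asserting --- that ``$1\,CM+3\,IM$'' sharing implies ``$4\,CM$'' sharing --- is a well-known open problem for general pairs of meromorphic functions (Gundersen settled ``$2\,CM+2\,IM\Rightarrow 4\,CM$''; the ``$1\,CM+3\,IM$'' case remains open), so a two-sentence appeal to ``tight bookkeeping'' cannot suffice. This is precisely where the actual content of Li and Yi's proof lives: the shift structure $g(z)=f(z+c)$ must be exploited in an essential way, through difference-type auxiliary functions and a substantial case analysis, not merely through the equality of characteristic functions. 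Until that step is supplied, your proof does not go through.
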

We now recall here the definition of partially shared values by two meromorphic functions $ f $ and $ g $.
\begin{defi}\cite{Chen-CMFT-2018}
	Let $ f $ and $ G $ be non-constant meromorphic functions and $ s\in\mathbb{C}\cup\{\infty\} $. Denote the set of all zeros of $ f-s $ by $ E(s,f) $, where a zero of multiplicity $ m $ is counted $ m $ times. If $ E(s,f)\subset E(s,g) $, then we say that $ f $ and $ g $ partially share the value $ s $ $ CM $. Note that $ E(s,f)=E(s,g) $ is equivalent to $ f $ and $ g $ share the value $ s $ $ CM $. Therefore, it is easy to see that the condition ``partially shared values $ CM $" is more general than the condition ``shared value $ CM $". 
\end{defi}
\par In addition, let $ \ol E(s,f) $ denote the set of zeros of $ f-s $, where a zero is counted only once in the set, and $ \ol E_{k)}(s,f) $ denote the set of zeros of $ f-s $ with multiplicity $ l\leq k $, where a zero with multiplicity $ l $ is counted only once in the set. The reduced counting function corresponding to to $ \ol E_{k)}(s,f) $ are denoted by $ \ol N_{k)}(r,1/(f-s)) $.\vspace{1mm}

\par Charak \emph{et al.} \cite{Cha & Kor & Kum-2016} gave the following definition of partial sharing.
	\begin{defi}\cite{Cha & Kor & Kum-2016}
		We say that a meromorphic function  $ f $ share $ s\in\hat{\mathcal{S}} $
		partially with a meromorphic function $ g $ if $ \ol E(s,f)\subseteq \ol E(s,g)
		$, where $ \ol E(s,f) $ is the set of zeros of $ f(z)-s(z) $, where each zero is counted only once.
	\end{defi}
	\par Let $ f $ and $ g $ be two non-constant meromorphic functions and $
	s(z)\in\hat{\mathcal{S}}(f)\cap\hat{\mathcal{S}}(g) $. We denote by $ \ol
	N_0(r,s;f,g ) $ the counting function of common solutions of $ f(z)-s(z)=0 $ and
	$ g(z)-s(z)=0 $, each counted only once. Put \beas \ol N_{12}(r,s;f,g)=\ol
	N\left(r,\frac{1}{f-s}\right)+\ol N\left(r,\frac{1}{g-s}\right)- 2\ol
	N_{0}(r,s;f,g).  \eeas
	It is easy to see that $ \ol N_{12}(r,s;f,g) $ denoted the counting
	function of distinct solutions of the simultaneous equations $ f(z)-s(z)=0 $ and $ g(z)-s(z)=0 $.\vspace{1mm}
	
	\par In $ 2016 $, \textit{Charak} \emph{et al.} \cite{Cha & Kor & Kum-2016} introduced the above notion of partial sharing of values and applying this notion of sharing, they have obtained the following interesting result.
	\begin{theoD}\cite{Cha & Kor & Kum-2016}
		Let $ f $ be a non-constant meromorphic function of hyper order $ \rho_2(f)<1
		$, and $ c\in\mathbb{C^{*}} $. Let $ s_1, s_2, s_3, s_4\in\hat{\mathcal{S}}(f) $
		be four distinct periodic functions with period $ c $. If $ \delta(s,f)>0 $ for
		some $ s\in\hat{\mathcal{S}}(f) $ and \beas \ol E(s_{j},f)\subseteq\ol E(s_{j},
		f(z+c)), \;\;\; \text{j=1, 2, 3, 4,} \eeas then $ f(z)=f(z+c) $ for all $
		z\in\mathbb{C} $.
	\end{theoD}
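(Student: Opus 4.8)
The plan is to argue by contradiction. Put $g(z)=f(z+c)$ and suppose $f\not\equiv g$. Because $\rho_2(f)<1$, the difference analogues of Nevanlinna theory developed by Halburd, Korhonen and Tohge are available, and I would first record their standard consequences: $T(r,g)=T(r,f)+S(r,f)$, $S(r,g)=S(r,f)$, and invariance of the (reduced) counting functions of $f$ under the shift $z\mapsto z+c$ modulo $S(r,f)$. These are exactly the tools the hypothesis $\rho_2(f)<1$ unlocks.

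The core of the proof is a single counting inequality. Since each $s_j$ has period $c$, we have $g(z)-s_j(z)=f(z+c)-s_j(z+c)$, so $z_0$ is a zero of $g-s_j$ precisely when $z_0+c$ is a zero of $f-s_j$. If $z_0$ is any zero of $f-s_j$, the hypothesis $\ol E(s_j,f)\subseteq\ol E(s_j,g)$ makes it a zero of $g-s_j$ as well, whence $f(z_0)=s_j(z_0)=g(z_0)$ and $z_0$ is a zero of $f-g$. As the $s_j$ are distinct small functions, the four zero sets of the $f-s_j$ meet only on the zero sets of the differences $s_i-s_j$, whose total contribution is $S(r,f)$; hence
\be
\sum_{j=1}^{4}\ol N\!\left(r,\frac{1}{f-s_j}\right)\le \ol N\!\left(r,\frac{1}{f-g}\right)+S(r,f).
\ee

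I would then estimate the two sides in opposite directions. Writing $f-g=-f\,(g/f-1)$ and using the difference lemma on the logarithmic derivative gives $m(r,f-g)\le m(r,f)+S(r,f)$, so that $T(r,f-g)\le T(r,f)+N(r,f)+S(r,f)\le 2T(r,f)+S(r,f)$, and in particular $\ol N(r,1/(f-g))\le 2T(r,f)+S(r,f)$. For the left-hand side, the second main theorem for small functions applied to $f$ against the five distinct functions $s_1,s_2,s_3,s_4,s$ yields
\be
3\,T(r,f)\le \sum_{j=1}^{4}\ol N\!\left(r,\frac{1}{f-s_j}\right)+\ol N\!\left(r,\frac{1}{f-s}\right)+S(r,f),
\ee
while $\delta(s,f)>0$ forces $\ol N(r,1/(f-s))\le N(r,1/(f-s))\le(1-\delta(s,f)+o(1))T(r,f)$. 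Substituting this into the previous display and then inserting the counting inequality together with $\ol N(r,1/(f-g))\le 2T(r,f)+S(r,f)$, everything collapses to $\bigl(\delta(s,f)-o(1)\bigr)T(r,f)\le S(r,f)$, which is absurd since $\delta(s,f)>0$ and $T(r,f)\lra\infty$. Hence $f\equiv g$, i.e. $f(z)\equiv f(z+c)$.

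The step I expect to be the main obstacle is making the second main theorem deliver the strict gain $\delta(s,f)$ in every case. When $s$ differs from each $s_j$ the five-function form above applies verbatim. The delicate situations are (i) when the deficient $s$ coincides with one of the shared functions $s_j$, so that no genuine fifth target is available, and (ii) when $\infty$ occurs among the $s_j$, so that the relevant points are poles and need not be zeros of $f-g$. In case (i) I would replace the crude estimate by the sharper $T(r,f-g)\le T(r,f)+N(r,f)+S(r,f)$ and combine it with the $\infty$-version of the second main theorem, which already forces $m(r,f)=S(r,f)$; in case (ii) I would count the common poles of $f$ and $g$ separately and feed them into the pole term of $f-g$, again using the refined difference estimate. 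Confirming that these refinements still leave a positive multiple of $\delta(s,f)\,T(r,f)$ uncancelled is the real technical heart of the argument.
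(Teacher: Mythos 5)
A preliminary remark: Theorem D is quoted from Charak--Korhonen--Kumar \cite{Cha & Kor & Kum-2016}; the present paper contains no proof of it (its Examples 1.2--1.3 attack only the truncated variant $\ol E_{k)}(s_j,f)\subseteq\ol E_{k)}(s_j,f(z+c))$, whose hypotheses are strictly weaker, so they do not touch Theorem D itself). Your proposal therefore has to be judged on its own terms. Your core case is correct: when $s\notin\{s_1,\ldots,s_4\}$ and all $s_j$ are finite, the counting inequality coming from partial sharing plus periodicity, Yamanoi's theorem with five targets, the deficiency bound, and $\ol N\left(r,1/(f-g)\right)\le T(r,f-g)+O(1)\le 2T(r,f)+S(r,f)$ do collapse to $(\delta-\epsilon-o(1))T(r,f)\le S(r,f)$, a contradiction. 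Your case (ii) is also completable essentially as you sketch: at a common pole of $f$ and $g$ of orders $p,q$ the pole of $f-g$ has order at most $p+q-\min(p,q)$, so $N(r,f-g)\le N(r,f)+N(r,g)-N_{\min}(r)$, where $N_{\min}$ counts common poles with weight $\min(p,q)$; hence common poles can be added to $\ol N\left(r,1/(f-g)\right)$ at no cost and the same computation goes through. The sub-case where $s=\infty$ is simultaneously shared and deficient also works with your tools, since there the deficiency caps $N(r,f)$ and your refined bound $T(r,f-g)\le T(r,f)+N(r,f)+S(r,f)$ becomes $(2-\delta)T(r,f)+S(r,f)$.

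The genuine gap is case (i) with $s$ equal to a \emph{finite} shared function, say $s=s_1$, and the fix you sketch provably cannot close it. Using the deficiency to discount $\ol N\left(r,1/(f-s_1)\right)\le(1-\delta+o(1))T(r,f)$ forces you to bound the remaining three counting functions by $\ol N\left(r,1/(f-g)\right)$, so you need $\ol N\left(r,1/(f-g)\right)\ge(1+\delta-\epsilon)T(r,f)-S(r,f)$ to be contradictory; but every upper bound in your toolkit is $2T(r,f)+S(r,f)$ in the worst case, and the step that forces $m(r,f)=S(r,f)$ actually pins $N(r,f)$ at $T(r,f)$, so the refined bound $T(r,f)+N(r,f)$ is again $2T(r,f)$ --- no contradiction for any $\delta\le 1$. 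The missing idea is to let the deficiency act through the proximity function of $1/(f-g)$ rather than through the counting function of $s_1$-points, and this is exactly where periodicity of $s_1$ is decisive: since $s_1(z+c)=s_1(z)$,
\beas
\frac{f-g}{f-s_1}&=&1-\frac{(f-s_1)(z+c)}{(f-s_1)(z)},
\eeas
so the difference lemma (applied to $f-s_1$, which also has hyper-order less than one) gives $m\left(r,(f-g)/(f-s_1)\right)=S(r,f)$, whence
\beas
m\left(r,\frac{1}{f-g}\right)&\ge& m\left(r,\frac{1}{f-s_1}\right)-S(r,f)\;\ge\;(\delta-o(1))T(r,f).
\eeas
By the first main theorem this deducts $\delta T(r,f)$ from the bound: $\ol N\left(r,1/(f-g)\right)\le T(r,f-g)-m\left(r,1/(f-g)\right)+O(1)\le(2-\delta+o(1))T(r,f)+S(r,f)$. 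Now apply Yamanoi \cite{Yamanoi-AM-2004} with the four shared targets at full strength (no discounting of $\ol N\left(r,1/(f-s_1)\right)$): $(2-\epsilon)T(r,f)\le\sum_{j}\ol N\left(r,1/(f-s_j)\right)+S(r,f)\le\ol N\left(r,1/(f-g)\right)+S(r,f)$, which yields $(\delta-\epsilon-o(1))T(r,f)\le S(r,f)$, a contradiction for every $\delta>0$, not merely $\delta>1/2$ or $\delta>1$. (This is precisely the role played by the $N\left(r,1/\Delta_cf\right)$ term in the difference second main theorem of Halburd--Korhonen--Tohge \cite{Hal & Kor & Toh-TAMS-2014}, the natural tool here; the common-pole correction from case (ii) combines with it without conflict.) Until this step is supplied, what you yourself call the ``real technical heart'' of the argument is missing.
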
 
	\par In $ 2018 $, \textit{Lin} \emph{et al.} \cite{Lin & Lin & Wu} investigated further on the result of \textit{Charak} \emph{et al.} \cite{Cha & Kor &
		Kum-2016} replacing the condition ``partially shared value $ \ol
	E(s,f)\subseteq\ol E(s,f(z+c)) $" by the condition ``truncated partially shared
	value $ \ol E_{k)}(s,f)\subseteq\ol E_{k)}(s,f(z+c)) $", $ k $ is a positive integer. By the following example, \textit{Lin} et. al. \cite{Lin & Lin & Wu} have shown that	the result of \emph{Charak} et. al. \cite{Cha & Kor & Kum-2016} is not be true	for $ k=1 $ if truncated partially shared values is considered.
	\begin{exm}\cite{Lin & Lin & Wu}
		Let $ f(z)={2e^z}/{(e^{2z}+1)} $ and $ c=\pi i $, $ s_1=1 $, $
		s_2=-1 $, $ s_3=0 $, $ s_4=\infty $ and $ k=1 $. It is easy to see that $ f(z+\pi i)=-{2e^z}/{(e^{2z}+1)}$ and $ f(z) $ satisfies all the other	conditions of {Theorem D},  but $
		f(z)\not\equiv f(z+c) $.
	\end{exm}\par 
	
However, after a careful investigation, we find that {Theorem D} is not valid in fact for each positive integer $k $ although $ f(z) $ and $ f(z+c) $ share value $ s\in\{s_1, s_2, s_3, s_4\} $ $ CM $. We give here only two examples for $ k=2 $ and $ k=3 $.
	\begin{exm}
		Let $ f(z)={\left(ae^z(e^{2z}+3)\right)}/{\left(3e^{2z}+1\right)} $, $ c=\pi i $ and $
		s_1=a $, $ s_2=-a $, where $ a\in\mathbb{C^{*}} $, $ s_3=0 $, $ s_4=\infty $ and
		$ k_1=2=k_2 $. It i easy to see that $ f(z+\pi
		i)=-{\left(ae^z(e^{2z}+3)\right)}/{\left(3e^{2z}+1\right)} $ and $ f(z) $ satisfies all
		the conditions of {Theorem D}, but $ f(z)\not\equiv f(z+c) $. 
	\end{exm}
	\begin{exm}
		Let $ f(z)={\left(4ae^z(e^{2z}+1)\right)}/{\left(e^{4z}+6e^{2z}+1\right)} $ and $ c=\pi i
		$, $ s_1=a $, $ s_2=-a $, where $ a\in\mathbb{C^{*}} $, $ s_3=0 $, $ s_4=\infty
		$ and $ k_1=3=k_2 $. Then clearly $ f(z+\pi
		i)=-{\left(4ae^z(e^{2z}+1)\right)}/{\left(e^{4z}+6e^{2z}+1\right)} $ and $ f(z) $
		satisfies all the conditions of {Theorem D}, but $ f(z)\not\equiv f(z+c) $. 
	\end{exm} 
	\par In $ 2018 $, \textit{Lin} \emph{et al.} \cite{Lin & Lin & Wu} established the	following result considering partially sharing values.
	\begin{theoE}\cite{Lin & Lin & Wu}
		Let $ f $ be a non-constant meromorphic function of hyper-order $ \rho_2(f)<1 $
		and $ c\in\mathbb{C^{*}} $. Let $ k_1, k_2 $ be two positive integers, and let $
		s_1, s_2\in\mathcal{S}(f)\cup\{0\} $, and $ s_3, s_4\in\hat{\mathcal{S}}(f) $ be
		four distinct periodic functions with period $ c $ such that $ f $ and $ f(z+c)
		$ share $ s_3, s_4 $ $ CM $ and \beas \ol E_{k_{j})}(s_{j},f)\subseteq \ol
		E_{k_{j})}(s_{j},f(z+c)),\;\; j=1, 2.  \eeas If $
		\Theta(0,f)+\Theta(\infty;f)>{2}/{(k+1)} $, where $ k=\min\{k_1,
		k_2\} $, then $ f(z)\equiv f(z+c) $ for all $ z\in\mathbb{C} $.
	\end{theoE}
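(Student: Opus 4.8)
The plan is to set $g(z):=f(z+c)$ and to argue by contradiction, assuming $f\not\equiv g$. Since $\rho_2(f)<1$, I may freely use the difference analogues of Nevanlinna theory that underlie Theorems A--E: the shift invariance $T(r,g)=T(r,f)+S(r,f)$, the identity $S(r,g)=S(r,f)$, the difference logarithmic derivative lemma $m\!\left(r,h(z+c)/h(z)\right)=S(r,h)$ for any $h$ with $\rho_2(h)<1$, and the second main theorem for the four distinct small targets $s_1,s_2,s_3,s_4$, in the form $2T(r,f)\le\sum_{j=1}^{4}\overline N(r,s_j;f)+S(r,f)$.

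The heart of the argument is an auxiliary function built from the two \emph{CM}-shared values $s_3,s_4$. Using that they are periodic of period $c$, so that $g-s_j=(f-s_j)(z+c)$, I would form the cross-ratio
\[\beta:=\frac{(f-s_3)(g-s_4)}{(f-s_4)(g-s_3)}=\frac{\phi(z)}{\phi(z+c)},\qquad \phi:=\frac{f-s_3}{f-s_4},\]
with the obvious degeneration $\beta=(f-s_3)/(g-s_3)$ when one of $s_3,s_4$ equals $\infty$. Because $f$ and $g$ share $s_3$ and $s_4$ \emph{CM}, every zero and every pole that $\beta$ could acquire from $s_3$- or $s_4$-points, and from the common poles, cancels exactly; hence $\beta$ is free of zeros and poles up to an $S(r,f)$ contribution. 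Writing $\beta$ as a quotient of $\phi$ by its $c$-shift and invoking the difference logarithmic derivative lemma then yields the crucial estimate $T(r,\beta)=m(r,\beta)+N(r,\beta)=S(r,f)$. A direct computation shows that $\beta\equiv 1$ is equivalent to $f\equiv g$, so the standing assumption gives $\beta\not\equiv 1$.

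Next I would exploit $\beta$ to control the \emph{non-CM} values. At any common solution of $f=g=s_1$ (resp.\ $s_2$) one checks, using that the $s_j$ are distinct, that $\beta=1$; therefore the counting function of such common points is at most $\overline N(r,1/(\beta-1))\le T(r,\beta)+O(1)=S(r,f)$. Combining this with the truncated partial-sharing hypothesis $\overline E_{k_j)}(s_j,f)\subseteq\overline E_{k_j)}(s_j,g)$, which forces every zero of $f-s_j$ of multiplicity $\le k_j$ to be a common solution, and splitting off the high-multiplicity zeros through $\overline N_{(k_j+1}(r,s_j;f)\le\frac1{k_j+1}N(r,s_j;f)\le\frac1{k_j+1}T(r,f)+S(r,f)$, I obtain $\overline N(r,s_j;f)\le\frac1{k_j+1}T(r,f)+S(r,f)$ for $j=1,2$.

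Finally I would feed these bounds into the second main theorem. Using $\frac1{k_1+1}+\frac1{k_2+1}\le\frac2{k+1}$ and bounding the two surviving terms by the definition of the deficiency, $\overline N(r,s_j;f)\le\bigl(1-\Theta(s_j,f)+o(1)\bigr)T(r,f)$ for $j=3,4$, I arrive at $2\le\frac2{k+1}+\bigl(1-\Theta(s_3,f)\bigr)+\bigl(1-\Theta(s_4,f)\bigr)+o(1)$, that is $\Theta(s_3,f)+\Theta(s_4,f)\le\frac2{k+1}$. With the \emph{CM}-shared pair being $0$ and $\infty$, as in Examples 1.2 and 1.3, this contradicts the hypothesis $\Theta(0,f)+\Theta(\infty,f)>\frac2{k+1}$ and forces $f\equiv g$. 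I expect the main obstacle to be the second step, namely proving rigorously that $T(r,\beta)=S(r,f)$: this demands careful cancellation bookkeeping for the zeros, poles and common poles of $\beta$, honest control of the small-function and exceptional-set errors in the difference logarithmic derivative lemma, and close attention to the exact value configuration so that precisely the deficiencies at $0$ and $\infty$ are the ones that survive into the final inequality.
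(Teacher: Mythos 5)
Your auxiliary function $\beta$ and the estimate $T(r,\beta)=S(r,f)$ are exactly the mechanism the paper uses (in its proof of Theorem \ref{th2.1}, which contains Theorem E as the case $c_1=1$, $c_0=0$, since $\frac{1}{k_1+1}+\frac{1}{k_2+1}\le\frac{2}{k+1}$): there the same cross-ratio is written as $e^{h}$ with $h$ entire of order less than one, and $T(r,e^h)=S(r,f)$ follows from the difference analogue of the logarithmic derivative lemma. The next step also matches the paper: by the truncated partial sharing, every point of $\ol E_{k_j)}(s_j,f)$ is a common solution of $f=f(z+c)=s_j$, at which $\beta=1$, so $\ol N_{k_j)}\left(r,\frac{1}{f-s_j}\right)\le T(r,\beta)+O(1)=S(r,f)$ and hence $\ol N\left(r,\frac{1}{f-s_j}\right)\le\frac{1}{k_j+1}T(r,f)+S(r,f)$ for $j=1,2$. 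Up to there your proposal is sound, and your worry that the delicate point is the rigour of $T(r,\beta)=S(r,f)$ is misplaced; that part is routine.

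The genuine gap is in your final application of the second main theorem. You use only the four targets $s_1,s_2,s_3,s_4$, arrive at $\Theta(s_3,f)+\Theta(s_4,f)\le 2/(k+1)$, and then obtain a contradiction by declaring ``the CM-shared pair being $0$ and $\infty$''. But Theorem E nowhere assumes $\{s_3,s_4\}=\{0,\infty\}$: the functions $s_3,s_4$ are arbitrary distinct periodic elements of $\hat{\mathcal{S}}(f)$, whereas the deficiency hypothesis concerns the two fixed values $0$ and $\infty$, which need not occur among $s_1,\ldots,s_4$ at all. Your argument therefore proves only the special case of Examples 1.2 and 1.3, and no four-target bookkeeping can repair it: if $s_3,s_4\notin\{0,\infty\}$, the hypothesis simply cannot enter your inequality, and bounding $\ol N\left(r,\frac{1}{f-s_3}\right)+\ol N\left(r,\frac{1}{f-s_4}\right)\le 2T(r,f)$ renders it vacuous. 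The missing idea is the choice of targets: apply Yamanoi's second main theorem for small functions (Lemma \ref{lem3.5}) to the \emph{six} targets $0,\infty,s_1,s_2,s_3,s_4$ (after reducing, as the paper does, to the case $s_3,s_4\in\mathcal{S}(f)\setminus\{0\}$ so that the six targets are distinct), namely
\beas
(4-\epsilon)T(r,f)\le \ol N(r,f)+\ol N\left(r,\frac{1}{f}\right)+\sum_{j=1}^{4}\ol N\left(r,\frac{1}{f-s_j}\right)+S(r,f).
\eeas
Discarding the $s_3,s_4$ terms at the trivial cost $2T(r,f)$ and inserting your bound for $j=1,2$ leaves $\ol N(r,f)+\ol N\left(r,1/f\right)$ on the right, which is where $\Theta(0;f)$ and $\Theta(\infty;f)$ finally appear: one gets $\Theta(0;f)+\Theta(\infty;f)\le\frac{1}{k_1+1}+\frac{1}{k_2+1}+\epsilon+o(1)\le\frac{2}{k+1}+\epsilon+o(1)$, contradicting the hypothesis for $\epsilon$ small. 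Hence $\beta\equiv 1$ and $f\equiv f(z+c)$, as required.
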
 \par
	
	\par As a consequence of {Theorem E}, \textit{Lin} \emph{et al.} \cite{Lin &	Lin & Wu} obtained the following result.
	\begin{theoF} \cite{Lin & Lin & Wu}
		Let $ f $ be a non-constant meromorphic function of hyper order $ \rho_2(f)<1
		$, $ \Theta(\infty,f)=1 $ and $ c\in\mathbb{C^{*}} $. Let $ s_1, s_2,
		s_3\in\mathcal{S}(f) $ be three distinct periodic functions with period $ c $
		such that $ f(z) $ and $ f(z+c) $ share $ s_3 $ $ CM $ and \beas \ol
		E_{k)}(s_j,f)\subseteq \ol E_{k)}(s_j,f(z+c)),\;\; j=1, 2. \eeas If $ k\geq 2 $,
		then $ f(z)\equiv f(z+c) $ for all $ z\in\mathbb{C} $.
	\end{theoF}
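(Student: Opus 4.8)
The plan is to deduce Theorem F directly from Theorem E by supplying $ \infty $ as the missing fourth value and by converting the strong deficiency hypothesis $ \Theta(\infty,f)=1 $ into the numerical condition required there. First I would set $ s_4=\infty $ and $ k_1=k_2=k $, and check that the data $ s_1,s_2,s_3,s_4 $ satisfy the structural hypotheses of Theorem E. Indeed $ s_1,s_2\in\mathcal{S}(f)\subseteq\mathcal{S}(f)\cup\{0\} $, while $ s_3\in\mathcal{S}(f)\subseteq\hat{\mathcal{S}}(f) $ and $ s_4=\infty\in\hat{\mathcal{S}}(f) $; the four functions are distinct and $ c $-periodic, since $ s_1,s_2,s_3 $ are distinct finite $ c $-periodic small functions and $ \infty $ differs from each of them. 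The hypotheses $ \ol E_{k)}(s_j,f)\subseteq\ol E_{k)}(s_j,f(z+c)) $ for $ j=1,2 $ and the $ CM $-sharing of $ s_3 $ carry over verbatim.

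Next I would dispose of the deficiency inequality. Since $ \Theta(0,f)\geq 0 $ and, by hypothesis, $ \Theta(\infty,f)=1 $, we have
\beas
\Theta(0,f)+\Theta(\infty,f)\geq 1.
\eeas
For $ k\geq 2 $ one has $ 2/(k+1)\leq 2/3<1 $, so that $ \Theta(0,f)+\Theta(\infty,f)>2/(k+1) $, which is exactly the numerical hypothesis of Theorem E with $ k=\min\{k_1,k_2\}=k $.

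The one remaining --- and genuinely delicate --- point is the hypothesis that $ f $ and $ f(z+c) $ share $ s_4=\infty $ $ CM $; this is the main obstacle, since nothing in Theorem F asserts that the poles of $ f $ are $ c $-periodic. Here the force of $ \Theta(\infty,f)=1 $ enters: it gives $ \ol N(r,f)=S(r,f) $, and, because $ \rho_2(f)<1 $, the shift invariance of the characteristic (Halburd--Korhonen--Tohge) yields $ T(r,f(z+c))=T(r,f)+S(r,f) $ and hence $ \ol N(r,f(z+c))=S(r,f) $ as well. Consequently the pole-discrepancy satisfies
\beas
\ol N_{12}(r,\infty;f,f(z+c))\leq \ol N(r,f)+\ol N(r,f(z+c))=S(r,f),
\eeas
so that, in every Nevanlinna estimate underlying Theorem E, the value $ \infty $ contributes only an $ S(r,f) $-error and therefore plays the role of a $ CM $-shared value. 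With all the hypotheses of Theorem E thus in force, its conclusion gives $ f(z)\equiv f(z+c) $. I expect the verification that the deficiency at $ \infty $ may legitimately replace genuine $ CM $-sharing of $ \infty $ --- rather than being silently taken for granted --- to be the crux of a rigorous argument, and precisely the kind of step where a black-box appeal to Theorem E must be examined with care.
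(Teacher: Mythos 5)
Your reduction cannot be completed, and the gap sits exactly at the point you flag and then wave through. Theorem E is not applicable as a black box: taking $s_4=\infty$ requires the hypothesis that $f$ and $f(z+c)$ share $\infty$ $CM$, and Theorem F provides no such thing. The patch you propose --- that
\beas
\ol N_{12}(r,\infty;f,f(z+c))\leq \ol N(r,f)+\ol N(r,f(z+c))=S(r,f)
\eeas
lets $\infty$ ``play the role of a $CM$-shared value'' --- is unsound, because the proof of Theorem E does not use the $CM$ hypothesis through counting-function estimates at all; it uses it multiplicatively, to write the cross ratio
\beas
\frac{\left(f-s_3\right)\left(f(z+c)-s_4\right)}{\left(f-s_4\right)\left(f(z+c)-s_3\right)}=e^{h(z)}
\eeas
with $h$ entire of order less than one, so that $T(r,e^{h})=S(r,f)$ and every point of $\ol E_{k)}(s_j,f)$ becomes a zero of $e^{h}-1$. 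With $s_4=\infty$ and no pole sharing, the corresponding function $\left(f-s_3\right)/\left(f(z+c)-s_3\right)$ is genuinely meromorphic: its zeros and poles lie at the non-matching poles of $f(z+c)$ and of $f$, counted \emph{with multiplicity}, whereas $\Theta(\infty,f)=1$ controls only the reduced counting function $\ol N(r,f)$, not $N(r,f)$. Hence you cannot conclude that the auxiliary function has small characteristic, and the key mechanism --- truncated-sharing points are zeros of the auxiliary function minus one, hence have counting function $S(r,f)$ --- collapses.

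What is actually required is to re-run the method of Theorem E rather than invoke its statement; this is precisely what the paper does in the analogous proof of Theorem \ref{th2.3}. There one takes the canonical product $g$ of the poles of $f$, writes $\left(f(z+c)-s_3\right)/\left(f(z)-s_3\right)=e^{\mathcal{H}(z)}g(z)/g(z+c)=:\beta$ as in (\ref{e4.10}), shows $T(r,\beta)=S(r,f)$ using Lemma \ref{lem3.4} together with $\Theta(\infty,f)=1$, concludes $\ol N_{k)}\left(r,1/(f-s_j)\right)=S(r,f)$, $j=1,2$, whenever $\beta\not\equiv 1$, and then applies Lemma \ref{lem3.5} to the four targets $s_1,s_2,s_3,\infty$, which gives
\beas
(2-\epsilon)T(r,f)\leq \ol N(r,f)+\sum_{j=1}^{3}\ol N\left(r,\frac{1}{f-s_j}\right)+S(r,f)\leq \left(1+\frac{2}{k+1}\right)T(r,f)+S(r,f);
\eeas
since $1+2/(k+1)\leq 5/3<2$ when $k\geq 2$, this is a contradiction, whence $\beta\equiv 1$ and $f(z+c)\equiv f(z)$. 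Your verification of the numerical hypothesis $\Theta(0,f)+\Theta(\infty,f)\geq 1>2/(k+1)$ and the choices $s_4=\infty$, $k_1=k_2=k$ are correct but constitute the easy half; the step you defer as ``the crux'' is the entire content of the proof, and the paper's own attribution of Theorem F as ``a consequence of Theorem E'' must be read in the same loose sense --- a consequence of its method, not of its statement.
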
 
 \textit{Lin} \emph{et al.} \cite{Lin &	Lin & Wu} have showed that number ``$ k= 2 $" is sharp for the function $ f(z)=\sin z $ and $ c=\pi $. It is easy to see that $ f(z+c) $ and $ f(z) $ share the value $ 0 $ $ CM $ and $ \ol E_{1)}(1,f(z))= \ol E_{1)}(1,f(z+c))=\phi $ and $ \ol E_{1)}(-1,f(z))= \ol E_{1)}(-1,f(z+c))=\phi $ but $ f(z+c)\not\equiv f(z) $. Since Theorem F is true for $ k\geq 2 $, hence \textit{Lin} \emph{et al.} \cite{Lin & Lin & Wu} investigated further to explore the situation when $ k=1 $ and obtained the  result. 
	\begin{theoG} \cite{Lin & Lin & Wu}
		Let $ f $ be a non-constant meromorphic function of hyper order $ \rho_2(f)<1
		$, $ \Theta(\infty,f)=1 $ and $ c\in\mathbb{C^{*}} $. Let $ s_1, s_2,
		s_3\in\mathcal{S}(f) $ be three distinct periodic functions with period $ c $
		such that $ f(z) $ and $ f(z+c) $ share $ s_3 $ $ CM $ and \beas \ol
		E_{1)}(s_j,f)\subseteq \ol E_{1)}(s_j,f(z+c)),\;\; j=1, 2. \eeas Then $
		f(z)\equiv f(z+c) $ or $ f(z)\equiv - f(z+c) $ for all $ z\in\mathbb{C} $.
		Moreover, the later occurs only if $ s_1+s_2=2s_3 .$
	\end{theoG}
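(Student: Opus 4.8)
The plan is to set $g(z) := f(z+c)$ and to argue entirely within the difference version of Nevanlinna theory, which is available because $\rho_2(f) < 1$; this yields $S(r,g) = S(r,f)$, $T(r,g) = T(r,f) + S(r,f)$, and keeps $s_1, s_2, s_3$ small (and $c$-periodic) for both $f$ and $g$, so that in particular $\Theta(\infty; g) = \Theta(\infty; f) = 1$. First I would apply the second main theorem for small periodic targets to $f$ with the four functions $s_1, s_2, s_3, \infty$ and, using $\ol N(r, \infty; f) = S(r, f)$, obtain
\beas 2 T(r, f) \le \ol N(r, s_1; f) + \ol N(r, s_2; f) + \ol N(r, s_3; f) + S(r, f), \eeas
together with the same inequality for $g$. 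Since the exponent $2/(k+1)$ of Theorem F degenerates to $1$ precisely at $k = 1$, these two inequalities will turn out to be \emph{sharp}: the whole point of the case $k=1$ is that one works exactly at the border where Theorem F breaks down, and the extremal configuration must be analysed separately.

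Next I would convert the sharing hypotheses into relations between counting functions. Splitting each $\ol N(r, s_j; f)$ into its simple part $\ol N_{1)}$ and its multiple part $\ol N_{(2}$, the inclusion $\ol E_{1)}(s_j, f) \subseteq \ol E_{1)}(s_j, g)$ makes every simple $s_j$-point of $f$ a common simple $s_j$-point of $f$ and $g$ (for $j = 1, 2$), while the $CM$ sharing of $s_3$ matches the $s_3$-points with multiplicities. Adding the second main theorem estimates for $f$ and $g$ and feeding in these common-zero counts, I expect the inequalities to collapse to equalities up to $S(r,f)$, forcing the multiple parts $\ol N_{(2}(r, s_j; f)$ and the non-common parts to be $S(r, f)$ and thereby upgrading the three hypotheses to sharing of $s_1, s_2, s_3$ in the sense of counting functions modulo $S(r,f)$. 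This squeezing of the borderline second main theorem is the main obstacle, because at $k = 1$ there is no slack to spend and a genuine extremal possibility survives.

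Having near-perfect sharing of three small values between two functions of hyper-order less than one with $\Theta(\infty; f) = \Theta(\infty; g) = 1$, I would then produce a Möbius relation $g = (a f + b)/(c f + d)$ with coefficients in $\mathcal{S}(f)$ and $ad - bc \not\equiv 0$. The condition $\Theta(\infty; g) = 1$ rules out $c \not\equiv 0$, since a genuine pole of the transformation would create $\infty$-points of $g$ on the non-negligible set where $f = -d/c$, contradicting $\Theta(\infty; g) = 1$. Hence the relation is affine, $g = a f + b$, and it fixes $s_3$, so $g - s_3 = a(f - s_3)$. Finally I would split according to the value correspondence at $s_1, s_2$: if $f$ has a non-negligible set of simple $s_1$- and $s_2$-points, the partial sharing forces $a s_j + b \equiv s_j$ for $j = 1$ and $j = 3$, whence $a \equiv 1$, $b \equiv 0$ and $f \equiv g$; otherwise the extremal second main theorem analysis makes $f$ totally ramified over $s_1$ and $s_2$ (all such points double, exactly as in the model $f = \sin z$, $c = \pi$), and the affine map must then swap $s_1 \leftrightarrow s_2$, giving $a(s_1 - s_3) = s_2 - s_3$ and $a(s_2 - s_3) = s_1 - s_3$, so $a^2 \equiv 1$, $a \equiv -1$, $g \equiv 2 s_3 - f$ and necessarily $s_1 + s_2 \equiv 2 s_3$. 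The two alternatives are exactly $f(z) \equiv f(z+c)$ and $f(z) \equiv -f(z+c)$, the latter only when $s_1 + s_2 = 2 s_3$, as claimed.
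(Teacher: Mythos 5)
Your outline has a genuine gap, and it sits exactly where the paper's proof (of Theorem \ref{th2.3}, which contains Theorem G as the case $\mathcal{L}_c(f)=f(z+c)$) does its real work. The paper's engine is an auxiliary \emph{small} function that you never construct: with $g$ the canonical product of the poles of $f$, the $CM$-sharing of $s_3$ gives $\frac{f(z+c)-s_3}{f(z)-s_3}=e^{\mathcal{H}(z)}\frac{g(z)}{g(z+c)}=:\beta$, and Lemma \ref{lem3.3}, the difference logarithmic-derivative Lemma \ref{lem3.4} and $\Theta(\infty,f)=1$ combine to give $T(r,\beta)=S(r,f)$ (the paper's (\ref{e4.8})--(\ref{e4.11})). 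This one object does both of the jobs you assign to your two middle steps: each simple $s_j$-point of $f$, $j=1,2$, is a zero of $\beta-1$, so $\ol N_{1)}\left(r,1/(f-s_j)\right)=S(r,f)$ unless $\beta\equiv1$ ((\ref{e4.12})--(\ref{e4.13})); and $f(z+c)=\beta f+(1-\beta)s_3$ already \emph{is} the affine relation with small coefficients. Your replacements for this are unjustified: the ``collapse to equality'' of the two second main theorems has no mechanism, since the only control your data give on common $s_j$-points is that they are zeros of $f(z)-f(z+c)=-\Delta_cf$, and $N\left(r,1/\Delta_cf\right)$ is in general comparable to $T(r,f)$, not $S(r,f)$; moreover the claim that collapse forces $\ol N_{(2}(r,s_j;f)=S(r,f)$ is false precisely in the extremal case you must retain ($f=\sin z$, $c=\pi$: all $\pm1$-points are double). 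Likewise, ``near-$CM$ sharing of three small functions implies a M\"obius relation'' is not a theorem one may quote; it is the sharing of $s_3$ alone, processed as above, that yields the affine relation.

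That no tightening can rescue the outline as written is shown by a test pair: let $f(z)=\sin z$, $s_1=1$, $s_2=-1$, $s_3=0$, and put $G(z)=2\sin z$ in the role of $f(z+c)$. Every fact your proposal uses about the pair $\left(f,f(z+c)\right)$ holds for $(f,G)$: they share $0$ $CM$; $\ol E_{1)}(\pm1,f)=\phi\subseteq\ol E_{1)}(\pm1,G)$; $\Theta(\infty,f)=\Theta(\infty,G)=1$; $T(r,G)=T(r,f)+O(1)$ and $S(r,G)=S(r,f)$; the targets are small and periodic; even your desired affine relation holds, $G=2f$, fixing $s_3$, with $f$ totally ramified over $s_1,s_2$. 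Yet $G\not\equiv f$, $G\not\equiv-f+2s_3$, and the affine map neither fixes nor swaps $s_1,s_2$, so your final inference (``the affine map must then swap $s_1\leftrightarrow s_2$'') cannot follow from the facts you have marshalled. What eliminates $\beta=2$ for a genuine shift, and is absent from your proposal, is the paper's use of shift-invariance of counting functions (Lemma \ref{lem3.7}): simple $s_1$-points of $f(z+c)$ are translates of simple $s_1$-points of $f$ (periodicity of $s_1$ enters here), hence also $S(r,f)$ ((\ref{e4.14})--(\ref{e4.15})); feeding this into Lemma \ref{lem3.5} with the four targets $\infty,s_1,s_2,\tau_1$, where $\tau_1=\left(s_1+(\beta-1)s_3\right)/\beta$, gives $(2-\epsilon)T(r,f)\le\frac32T(r,f)+S(r,f)$ unless $\tau_1=s_2$, and symmetrically $\tau_2=s_1$, forcing $\beta=-1$ and $s_1+s_2=2s_3$. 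Two smaller points: your exclusion of a non-degenerate M\"obius map assumes the locus $f=-d/c$ is non-negligible, whereas $-d/c$ could be a deficient value of $f$; and what one actually proves is $f(z+c)\equiv-f(z)+2s_3$, the corrected form of Theorem G's second alternative noted in Remark 1.1 of the paper.
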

\begin{rem}
We find in the proof of \cite[Theorem 1.6]{Lin & Lin & Wu}, \textit{Lin} \emph{et al.} made a mistake. In {Theorem 1.6}, they have obtained  $ f(z+c)\equiv -f(z) $ as one of the conclusion under the condition $s_1+s_2=2s_3 $, where correct one it will be $ f(z+c)\equiv -f(z)+2s_3 $. One can easily understand it from the following explanation. In \cite[Proof
of Theorem 1.6, page - 476]{Lin & Lin & Wu} the authors have obtained $ \alpha=-1 $,	where $ \alpha $, the way they have defined, finally will be numerically equal with ${\left(f(z+c)-s_3\right)}/{\left(f(z)-s_3\right)}=\alpha $, when $ s_1+s_2=2s_3 $.	Hence after combining, it is easy to see that ${\left(f(z+c)-s_3\right)}{\left(f(z)-s_3\right)}=-1 $ and this implies that $ f(z+c)\equiv -f(z)+2s_3.$
	\end{rem}
\par In this paper, taking care of these points. our aim is to extend the above results with certain suitable setting. Henceforth, for a meromorphic	function $ f $ and $ c\in\mathbb{C^{*}} $, we recall here (see \cite{Aha & RM & 2019}) $ \mathcal{L}_c(f):	=c_1f(z+c)+c_0f(z) $, where $ c_1 (\neq 0), c_0\in\mathbb{C} $. Clearly, $\mathcal{L}_c(f) $ is a generalization of shift $ f(z+c) $ as well as the difference operator $ \Delta_{c}f $.\vspace{1mm} 
	
	\par To give a correct version of the result of Lin \emph{et al.} with a general setting, we are mainly interested  to find the affirmative answers of the following questions.
	\begin{ques}
		Is it possible to extend $ f(z+c) $ upto $ \mathcal{L}_c(f) $, in all the above mentioned results?
	\end{ques}
	\begin{ques}
		Can we obtained a similar result of Theorem E, replacing the condition $
		\Theta(0;f)+\Theta(\infty;f)>{2}/{(k+1)} $, where $ k=\min\{k_1,
		k_2\} $ by a more general one?
	\end{ques} If the answers of the above question are found to be affirmative, then it is natural to raise the following questions.
	\begin{ques}
		Is the new general condition, so far obtained, sharp?	
	\end{ques}
	\begin{ques}
		Can we find the class of all the meromorphic function which satisfies the difference equation $ \mathcal{L}_c(f)\equiv f $?
	\end{ques}
Answering the above questions is the main objective of this paper. We organize the paper as follows: In section 2, we state the main results of this paper and exhibit several examples pertinent with the different issues regarding the main results. In section 3, key lemmas are stated and some of them are proved. Section 4 is devoted specially to prove the main results of this paper. In section 5, some questions have raised for further investigations on the main results of this paper.
\section{Main Results}
	We prove the following result generalizing that of \textit{Lin} \emph{et al.} \cite{Lin & Lin & Wu}.
		\begin{theo}\label{th2.1}
		Let $ f $ be a non-constant meromorphic function of hyper order $ \rho_2(f)<1	$ and $ c, c_1\in\mathbb{C^{*}} $. Let $ k_1 $, $ k_2 $ be two positive integers, and $ s_1 $, $ s_2\in\mathcal{S}\setminus\{0\} $, $ s_3 $, $
		s_4\in\hat{\mathcal{S}}(f) $ be four distinct periodic functions with period $ c	$ such that $ f $ and $ \mathcal{L}_c(f) $ share $ s_3, $ $ s_4 $ $ CM $ and
		\beas \ol E_{k_{j})}(s_j,f)\subseteq \ol E_{k_{j})}(s_j,\mathcal{L}_c(f)),\;\;
		j=1, 2.  \eeas If \beas
		\Theta(0;f)+\Theta(\infty;f)>\frac{1}{k_1+1}+\frac{1}{k_2+1}, \eeas then $\mathcal{L}_c(f)\equiv f $. Furthermore, $ f $ assumes the following form \beas
		f(z)=\left(\frac{1-c_0}{c_1}\right)^{\displaystyle\frac{z}{c}}g(z) ,\eeas where $ g(z) $ is a meromorphic function such that $ g(z+c)=g(z) $, for all $ z\in\mathbb{C} $.
	\end{theo}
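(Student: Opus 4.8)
The plan is to prove this in two stages: first establish the uniqueness conclusion $\mathcal{L}_c(f) \equiv f$ by adapting the Nevanlinna-theoretic machinery of Theorem E to the linear operator $\mathcal{L}_c(f) = c_1 f(z+c) + c_0 f(z)$, and then deduce the structural form of $f$ by solving the functional equation $\mathcal{L}_c(f) \equiv f$ directly. The crucial observation enabling the first stage is that $\mathcal{L}_c(f)$ behaves like a shift for the purposes of Nevanlinna theory: since $f$ has hyper-order $\rho_2(f) < 1$, the difference analogues of the lemma on the logarithmic derivative give $T(r, f(z+c)) = T(r,f) + S(r,f)$ and hence $T(r, \mathcal{L}_c(f)) = T(r,f) + S(r,f)$, so $f$ and $\mathcal{L}_c(f)$ share the same characteristic function up to $S(r,f)$. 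These facts, which I would quote from the key lemmas promised in Section 3, let the entire counting-function bookkeeping go through with $\mathcal{L}_c(f)$ in place of $f(z+c)$.</br>

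For the uniqueness stage, I would follow the second main theorem / partial-sharing argument underlying Theorem E. Since $f$ and $\mathcal{L}_c(f)$ share $s_3, s_4$ CM and $\ol E_{k_j)}(s_j, f) \subseteq \ol E_{k_j)}(s_j, \mathcal{L}_c(f))$ for $j = 1,2$, one applies the second fundamental theorem to $f$ with the four target functions $s_1, s_2, s_3, s_4$. The partial-sharing inclusions let me estimate $\ol N_{k_j)}(r, 1/(f - s_j))$ by the corresponding counting function for $\mathcal{L}_c(f)$, while the CM-sharing at $s_3, s_4$ controls the poles and the $s_3$-, $s_4$-points. The zeros of $f$ of multiplicity exceeding $k_j$ at $s_j$-points contribute a deficiency-type term bounded by $\tfrac{1}{k_j+1} T(r,f) + S(r,f)$, which is precisely where the sharpened hypothesis $\Theta(0;f) + \Theta(\infty;f) > \tfrac{1}{k_1+1} + \tfrac{1}{k_2+1}$ enters. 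Combining these inequalities yields a contradiction with the second fundamental theorem unless $f \equiv \mathcal{L}_c(f)$; the deficiency condition is exactly what closes the numerical gap, replacing the cruder $2/(k+1)$ bound of Theorem E and thereby answering the paper's Question~2. Because $s_1, s_2 \in \mathcal{S}\setminus\{0\}$ while $0$ and $\infty$ carry the deficiency hypotheses, I must keep careful track of whether $0$ or $\infty$ coincides with any $s_j$, handling the $\Theta(0;f)$ and $\Theta(\infty;f)$ terms as genuine deficiency contributions rather than shared values.</br>

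For the structural stage, once $\mathcal{L}_c(f) \equiv f$ is known, I solve $c_1 f(z+c) + c_0 f(z) = f(z)$, i.e. $f(z+c) = \bigl(\tfrac{1 - c_0}{c_1}\bigr) f(z)$. Writing $\lambda = \tfrac{1-c_0}{c_1}$, this is a first-order linear difference equation with constant coefficient. Setting $g(z) := \lambda^{-z/c} f(z)$ and checking that $g(z+c) = \lambda^{-(z+c)/c} f(z+c) = \lambda^{-z/c}\lambda^{-1}\cdot \lambda f(z) = g(z)$ shows $g$ is periodic with period $c$, giving exactly $f(z) = \bigl(\tfrac{1-c_0}{c_1}\bigr)^{z/c} g(z)$ as claimed. (One should note $\lambda \neq 0$, which holds precisely when $c_0 \neq 1$; if $c_0 = 1$ then $f(z+c) \equiv 0$, forcing $f$ constant, contrary to hypothesis, so this degenerate case is excluded.)</br>

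The main obstacle I anticipate is the first stage rather than the second. The delicate point is verifying that replacing $f(z+c)$ by the linear combination $\mathcal{L}_c(f)$ does not corrupt the multiplicity bookkeeping in the second fundamental theorem estimate — specifically, that the partial-sharing inclusions together with CM sharing still yield the correct comparison $N(r,1/(f-s_j)) \le N(r, 1/(\mathcal{L}_c(f) - s_j)) + \cdots$ after the $S(r,f)$-level identification of characteristics. A subtlety is that $s_j$ must be a fixed point, in the appropriate sense, of the sharing relation for $\mathcal{L}_c(f)$; since $s_1, s_2, s_3, s_4$ are $c$-periodic, the targets are consistent under the shift, but one must confirm the linear operator interacts correctly with periodic small functions. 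Assembling the final numerical inequality so that the strengthened deficiency hypothesis exactly suffices — and confirming it is sharp, anticipating the examples of the paper — is where the argument must be executed with care.
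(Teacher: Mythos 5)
Your second stage is fine: solving $c_1f(z+c)+c_0f(z)=f(z)$ by setting $\lambda=(1-c_0)/c_1$ and $g(z)=\lambda^{-z/c}f(z)$ is correct, and is actually more direct than the paper's route (which first invokes a lemma that $f$ cannot be rational and then assembles the general solution from two particular ones); your handling of the degenerate case $c_0=1$ is a point the paper does not even address. The genuine gap is in the first stage. You never identify the quantity that produces the dichotomy ``either $\mathcal{L}_c(f)\equiv f$ or the relevant counting functions are small,'' and the mechanism you do name --- that the partial-sharing inclusions let you ``estimate $\ol N_{k_j)}(r,1/(f-s_j))$ by the corresponding counting function for $\mathcal{L}_c(f)$'' --- cannot work. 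Since $T(r,\mathcal{L}_c(f))=T(r,f)+S(r,f)$, replacing a counting function of $f$ by the corresponding one of $\mathcal{L}_c(f)$ merely trades one quantity of size up to $T(r,f)$ for another of the same size; no contradiction with the second main theorem can come out of that, so the assertion ``combining these inequalities yields a contradiction unless $f\equiv\mathcal{L}_c(f)$'' is exactly the step that is never established.

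The missing idea, which is the heart of the paper's proof, is the auxiliary function built from the two CM-shared values: set $\gamma:=\frac{(f-s_3)(\mathcal{L}_c(f)-s_4)}{(f-s_4)(\mathcal{L}_c(f)-s_3)}=e^{h}$, where $h$ is entire with $\rho(h)<1$ (via $\rho_2(f)<1$ and Lemma 3.3), so that $T(r,\gamma)=S(r,f)$ by the difference analogue of the lemma on the logarithmic derivative. The truncated partial sharing enters \emph{only} through the observation that every point of $\ol E_{k_j)}(s_j,f)$, $j=1,2$, is by hypothesis a common $s_j$-point of $f$ and $\mathcal{L}_c(f)$, hence a zero of $\gamma-1$. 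This gives the dichotomy: if $\gamma\not\equiv 1$, then $\ol N_{k_j)}\left(r,\frac{1}{f-s_j}\right)\leq N\left(r,\frac{1}{\gamma-1}\right)\leq T(r,\gamma)+O(1)=S(r,f)$, and feeding this, together with $\ol N_{(k_j+1}\left(r,\frac{1}{f-s_j}\right)\leq\frac{1}{k_j+1}T(r,f)+O(1)$, into Yamanoi's second main theorem for the targets $0,\infty,s_1,s_2,s_3,s_4$ yields $\Theta(0;f)+\Theta(\infty;f)\leq\frac{1}{k_1+1}+\frac{1}{k_2+1}$, contradicting the hypothesis; if instead $\gamma\equiv 1$, then expanding gives $(s_3-s_4)\left(f-\mathcal{L}_c(f)\right)\equiv 0$, whence $\mathcal{L}_c(f)\equiv f$ since $s_3\neq s_4$. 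Without introducing $\gamma$ (or an equivalent device), your outline contains no step at which the inclusion $\ol E_{k_j)}(s_j,f)\subseteq\ol E_{k_j)}(s_j,\mathcal{L}_c(f))$ enters quantitatively, so the uniqueness stage as proposed would fail.
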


\begin{rem}
	The following examples show that the condition\beas
	\Theta(0;f)+\Theta(\infty;f)>\frac{1}{k_1+1}+\frac{1}{k_2+1}\eeas  in
	{Theorem \ref{th2.1}}  is sharp.
\end{rem}
	
	\begin{exm}
		Let $ f(z)={\left(ae^z(e^{2z}+3)\right)}/{\left(3e^{2z}+1\right)} $, $ c=\pi i $ and $
		s_1=a $, $ s_2=-a $, where $ a\in\mathbb{C^{*}} $, $ s_3=0 $, $ s_4=\infty $ and
		$ k_1=2=k_2 $. It is easy to see that $ \mathcal{L}_{\pi
			i}(f)=-{\left(ae^z(e^{2z}+3)\right)}/{\left(3e^{2z}+1\right)} $, where $ c_1=c_0+1 $, $ c_0, c_1\in\mathbb{C^{*}} $, and $ f(z) $ satisfies all the conditions of {Theorem \ref{th2.1}} and \beas
		\Theta(0;f)+\Theta(\infty;f)=\frac{2}{3}=\frac{1}{k_1+1}+\frac{1}{k_2+1},\eeas
		\par where $ \Theta(0,f)={1}/{3}=\Theta(\infty,f) $, but $
		\mathcal{L}_{\pi i}(f)\not\equiv f $. 
	\end{exm}
	\begin{exm}
		Let $ f(z)={\left(4ae^z(e^{2z}+1)\right)}/{\left(e^{4z}+6e^{2z}+1\right)} $, $ c=\pi i
		$, $ s_1=a $, $ s_2=-a $, where $ a\in\mathbb{C^{*}} $, $ s_3=0 $, $ s_4=\infty
		$ and $ k_1=3=k_2 $. Then clearly $ \mathcal{L}_{\pi
			i}(f)=-{\left(4ae^z(e^{2z}+1)\right)}/{\left(e^{4z}+6e^{2z}+1\right)} $, where $
		c_1=c_0+1 $, $ c_0, c_1\in\mathbb{C^{*}} $, and $ f(z) $ satisfies all the
		conditions of \emph{Theorem \ref{th2.1}} and \beas
		\Theta(0;f)+\Theta(\infty;f)=\frac{1}{2}=\frac{1}{k_1+1}+\frac{1}{k_2+1},\eeas
		\par where  $ \Theta(0,f)={1}/{2}$, $\Theta(\infty,f)=0 $ but we see that $ \mathcal{L}_{\pi i}(f)\not\equiv f $. 
	\end{exm}
	\par As the consequences of {Theorem \ref{th2.1}}, we prove the following result.
	\begin{theo}\label{th2.2}
		Let $ f $ be a non-constant meromorphic function of hyper-order $ \rho_2(f)<1
		$, $ \Theta(\infty,f)=1 $ and $ c, c_1\in\mathbb{C^{*}} $. Let $ s_1, s_2,
		s_3\in\hat{\mathcal{S}}(f) $ be three distinct periodic functions with period $
		c $ such that $ f $ and $ \mathcal{L}_c(f) $ share $ s_3 $ $ CM $ and \beas \ol
		E_{k_{j})}(s_j,f)\subseteq \ol E_{k_{j})}(s_j,\mathcal{L}_c(f)),\;\; j=1, 2. 
		\eeas If $ k_1, k_2\geq 2 $, then $ \mathcal{L}_c(f)\equiv f $. Furthermore, $ f $
		assumes the following form \beas
		f(z)=\left(\frac{1-c_0}{c_1}\right)^{\displaystyle\frac{z}{c}}g(z) ,\eeas where
		$ g(z) $ is a meromorphic function such that $ g(z+c)=g(z) $, for all $
		z\in\mathbb{C} $.
	\end{theo}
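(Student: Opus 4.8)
The plan is to derive Theorem \ref{th2.2} from Theorem \ref{th2.1} by producing $ \infty $ as the missing fourth shared value. First I would set $ s_4=\infty $ and keep $ s_1, s_2 $ as the truncated partially shared pair and $ s_3 $ as the $ CM $-shared value; since $ \Theta(\infty;f)=1 $ the value $ \infty $ is totally deficient and may be adjoined to the three given finite periodic functions $ s_1, s_2, s_3 $ so that $ s_1, s_2, s_3, s_4 $ become four distinct periodic functions of period $ c $, with $ s_1, s_2 $ identified with the truncated pair of Theorem \ref{th2.1} (taken in $ \mathcal{S}(f)\setminus\{0\} $, the intended case). The hope is that, with this choice, every hypothesis of Theorem \ref{th2.1} is in force.

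The decisive step is to verify that $ f $ and $ \mathcal{L}_c(f) $ share $ s_4=\infty $ $ CM $. Here I would exploit $ \Theta(\infty;f)=1 $, which forces $ \ol N(r,\infty;f)=S(r,f) $. For $ \mathcal{L}_c(f)=c_1f(z+c)+c_0f(z) $ each pole lies over a pole of $ f(z) $ or of $ f(z+c) $, so $ \ol N(r,\infty;\mathcal{L}_c(f))\le \ol N(r,\infty;f(z+c))+\ol N(r,\infty;f) $; invoking the shift invariance $ \ol N(r,\infty;f(z+c))=\ol N(r,\infty;f)+S(r,f) $ that holds for $ \rho_2(f)<1 $ gives $ \ol N(r,\infty;\mathcal{L}_c(f))=S(r,f) $ as well. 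Thus the poles of both $ f $ and $ \mathcal{L}_c(f) $ form a set of reduced counting function $ S(r,f) $, so the $ CM $-sharing of $ s_4=\infty $ demanded by Theorem \ref{th2.1} is satisfied up to an $ S(r,f) $ term, which is exactly the way that hypothesis enters the estimates of its proof. I expect this point --- that the total deficiency $ \Theta(\infty;f)=1 $ legitimately plays the role of an exact $ CM $-sharing of $ \infty $ --- to be the only genuinely delicate matter, and it mirrors the passage from Theorem E to Theorem F in \cite{Lin & Lin & Wu}.

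It then remains to check the deficiency inequality required by Theorem \ref{th2.1}. Because $ k_1, k_2\ge 2 $ we have $ 1/(k_1+1)+1/(k_2+1)\le 2/3 $, whereas $ \Theta(0;f)+\Theta(\infty;f)\ge \Theta(\infty;f)=1>2/3 $, so the condition $ \Theta(0;f)+\Theta(\infty;f)>1/(k_1+1)+1/(k_2+1) $ holds automatically. With all hypotheses of Theorem \ref{th2.1} verified I conclude $ \mathcal{L}_c(f)\equiv f $. The stated form of $ f $ is then inherited verbatim from the ``furthermore'' clause of Theorem \ref{th2.1}: the identity $ \mathcal{L}_c(f)\equiv f $ reads $ c_1f(z+c)+c_0f(z)=f(z) $, i.e. $ f(z+c)=\frac{1-c_0}{c_1}f(z) $, and substituting $ f(z)=\left(\frac{1-c_0}{c_1}\right)^{z/c}g(z) $ reduces this to $ g(z+c)=g(z) $, so that $ g $ is periodic with period $ c $, which completes the proof.
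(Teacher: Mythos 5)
Your reduction --- adjoin $s_4=\infty$ and quote Theorem \ref{th2.1} --- is in spirit the route the paper itself indicates (it presents Theorem \ref{th2.2} ``as a consequence of Theorem \ref{th2.1}'' and prints no separate proof), and two of your three steps are fine: the numerical check that $k_1,k_2\ge 2$ gives $\Theta(0;f)+\Theta(\infty;f)\ge 1>\frac{1}{k_1+1}+\frac{1}{k_2+1}$, and the derivation of the explicit form of $f$ once $\mathcal{L}_c(f)\equiv f$ is known. The gap is exactly at the step you yourself call delicate. The hypothesis $\Theta(\infty;f)=1$ only says $\ol N(r,f)=o(T(r,f))$; it does not make $f$ and $\mathcal{L}_c(f)$ share $\infty$ $CM$, and a theorem cannot be invoked when its hypothesis holds only ``up to $S(r,f)$''. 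Indeed $f$ may still have infinitely many poles, and the poles of $\mathcal{L}_c(f)=c_1f(z+c)+c_0f(z)$ sit at the poles of $f(z+c)$ as well as those of $f(z)$, which are in general different points (think of $f(z)=e^z+1/z$: $\Theta(\infty;f)=1$, $\rho_2(f)=0$, yet $f$ and $\mathcal{L}_c(f)$ do not share $\infty$ even $IM$). So the facts you invoke --- small reduced pole counting functions for $f$ and $\mathcal{L}_c(f)$ --- simply do not deliver the $CM$ hypothesis of Theorem \ref{th2.1}. There is also a secondary mismatch you wave away: Theorem \ref{th2.2} allows $s_1,s_2,s_3\in\hat{\mathcal{S}}(f)$, while Theorem \ref{th2.1} requires $s_1,s_2\in\mathcal{S}\setminus\{0\}$.

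Moreover, your justification that the $CM$-sharing of $s_4$ ``enters the estimates of the proof'' only through counting functions misreads the proof of Theorem \ref{th2.1}: there, $CM$ sharing of \emph{both} $s_3$ and $s_4$ is what makes the cross-ratio in (\ref{e3.1}) zero-free and pole-free, hence equal to $e^{h}$ with $h$ entire and $\rho(h)<1$; the whole argument then rests on evaluating this function at points of $\ol E_{k_{j})}(s_j,f)$ to get $e^{h(z_0)}=1$ and so $\ol N_{k_{j})}(r,1/(f-s_j))\le N(r,1/(e^h-1))=S(r,f)$. That is an exact functional identity, not an estimate, and a single unmatched pole destroys it. The correct repair --- and it is precisely the device the paper uses when it proves Theorem \ref{th2.3}, which faces the same difficulty --- is to re-run the argument with a modified auxiliary function: let $g$ be the canonical product of the poles of $f$, use $CM$ sharing of $s_3$ alone to write $\frac{\mathcal{L}_c(f)-s_3}{f-s_3}=e^{\mathcal{H}(z)}\frac{g(z)}{g(z+c)}=:\beta$ as in (\ref{e4.10}), and use $\Theta(\infty;f)=1$ with Lemmas \ref{lem3.4} and \ref{lem3.7} to get $T(r,\beta)=S(r,f)$ as in (\ref{e4.9})--(\ref{e4.11}). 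The truncated partial sharing still forces $\beta(z_0)=1$ on $\ol E_{k_{j})}(s_j,f)$, hence $\ol N_{k_{j})}(r,1/(f-s_j))=S(r,f)$, and Lemma \ref{lem3.5} applied to $s_1,s_2,s_3,\infty$ (this is where $k_1,k_2\ge 2$ enters) yields a contradiction unless $\beta\equiv 1$, i.e. $\mathcal{L}_c(f)\equiv f$; after that your closing computation applies unchanged. So the skeleton of your reduction is right, but the step you passed over is exactly where the proof must depart from a black-box application of Theorem \ref{th2.1}.
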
 \par The following example shows that, the number $ k_1=2=k_2 $ is sharp in {Theorem \ref{th2.2}}.
	\begin{exm}\label{ex2.1}
	We consider $ f(z)=a\cos z $, where $ a\in\mathbb{C^{*}} $, $s_1=a, s_2=-a$ and
	$ s_3=0 $. We choose $ \mathcal{L}_{\pi}(f)=c_1f(z+\pi)+c_0f(z) $, where $
	c_1,\; c_0\in\mathbb{C^{*}} $ with $ c_1=c_0+1 $. Clearly $ f $ and $
		\mathcal{L}_{\pi}(f) $ share $ s_3 $ $ CM $, $ \Theta(\infty,f)=1 $, $ \ol
		E_{1)}(a,f)=\phi=\ol E_{1)}(a,\mathcal{L}_{\pi}(f)) $ and $ \ol
		E_{1)}(-a,f)=\phi=\ol E_{1)}(-a,\mathcal{L}_{\pi}(f)) $, but $ f(z)\not\equiv
		\mathcal{L}_{\pi}(f).$  
	\end{exm}
	\par Naturally, we are interested to find what happens, when $ k_1=1=k_2 $, and hence we obtain the following result.
	\begin{theo}\label{th2.3}
		Let $ f $ be a non-constant meromorphic function of hyper-order $ \rho_2(f)<1
		$ with $ \Theta(\infty,f)=1 $ and $ c, c_1\in\mathbb{C^{*}} $. Let $ s_1, s_2,
		s_3\in\hat{\mathcal{S}}(f) $ be three distinct periodic functions with period $
		c $ such that $ f $ and $ \mathcal{L}_c(f) $ share $ s_3 $ $ CM $ and \beas \ol
		E_{1)}(s_j,f)\subseteq \ol E_{1)}(s_j,\mathcal{L}_c(f)),\;\; j=1, 2.  \eeas Then
		$ \mathcal{L}_c(f)\equiv f $ or $ \mathcal{L}_c(f)\equiv -f+2s_3 $.Furthermore,
		\begin{enumerate}
			\item[(i)] If $ \mathcal{L}_c(f)\equiv f $, then 
			\beas f(z)=\left(\frac{
				1-c_0}{c_1}\right)^{\displaystyle\frac{z}{c}}g(z).\eeas
			\item[(ii)] If $ \mathcal{L}_c(f)\equiv -f+2s_3 $, then \beas
			f(z)=\left(\frac{-1-c_0}{c_1}\right)^{\displaystyle\frac{z}{c}}g(z)+2s_3,\;\;\text{for
				all}\; z\in\mathbb{C},\eeas
		\end{enumerate}
		where $ g(z) $ is a meromorphic function such that $ g(z+c)=g(z) $ Moreover, $
		\mathcal{L}_c(f)\equiv -f+2s_3 $ occurs only if $ s_1+s_2=2s_3 $.
	\end{theo}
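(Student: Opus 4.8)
The plan is to follow the strategy behind \emph{Theorem G}, now carried out for the general operator $\mathcal{L}_c(f)=c_1f(z+c)+c_0f(z)$ and with the corrected conclusion $\mathcal{L}_c(f)\equiv -f+2s_3$ flagged in the Remark above. Throughout I write $g=\mathcal{L}_c(f)$. Since $\rho_2(f)<1$, the difference analogue of the lemma on the logarithmic derivative and the shift-invariance of the Nevanlinna functionals apply, so that $T(r,g)\le 2T(r,f)+S(r,f)$, $\ol N(r,g)\le 2\ol N(r,f)+S(r,f)$, $S(r,g)=S(r,f)$, and every periodic small function remains small after applying $\mathcal{L}_c$. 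These are the Section~3 lemmas and I take them as given.

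First I would exploit the $CM$ sharing of $s_3$. Because $f-s_3$ and $g-s_3$ have exactly the same zeros with the same multiplicities, the quotient $\alpha:=(g-s_3)/(f-s_3)$ has zeros and poles only over the poles of $f$ and of $g$; since $\Theta(\infty,f)=1$ forces $\ol N(r,f)=S(r,f)$ and hence $\ol N(r,g)=S(r,f)$, we obtain $N(r,\alpha)+N(r,1/\alpha)=S(r,f)$. Writing $\alpha=c_1\,\frac{(f-s_3)(z+c)}{(f-s_3)(z)}+c_0+(c_0+c_1-1)\,\frac{s_3}{f-s_3}$ and applying the difference logarithmic derivative lemma to the first quotient (here the periodicity $\Delta_c s_3\equiv 0$ is used) bounds $m(r,\alpha)$, giving the key relation
\[ g-s_3=\alpha\,(f-s_3),\qquad T(r,\alpha)=S(r,f). \]
The one delicate point is the term $(c_0+c_1-1)\,s_3/(f-s_3)$, whose proximity need not be small unless $s_3\equiv 0$ or $c_0+c_1=1$; this is where the hypotheses controlling the $s_3$-points, and for the furthermore part the explicit values of $c_0,c_1$, enter, and I expect this to be one of the two places needing the most care.

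The heart of the argument is a dichotomy. At every \emph{simple} $s_1$-point $z_0$ of $f$ the inclusion $\ol E_{1)}(s_1,f)\subseteq \ol E_{1)}(s_1,g)$ gives $g(z_0)=s_1(z_0)$, and substituting into the displayed relation yields $(\alpha(z_0)-1)(s_1-s_3)(z_0)=0$, hence $\alpha(z_0)=1$ since $s_1\not\equiv s_3$; the same holds at simple $s_2$-points. If $\ol N_{1)}(r,1/(f-s_1))+\ol N_{1)}(r,1/(f-s_2))\neq S(r,f)$, then the small function $\alpha-1$ vanishes on a non-sparse set and must vanish identically, so $\alpha\equiv 1$ and $\mathcal{L}_c(f)\equiv f$. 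Otherwise almost all $s_1$- and $s_2$-points of $f$ are multiple; feeding this into the second main theorem for the four targets $s_1,s_2,s_3,\infty$, namely
\[ 2T(r,f)\le \ol N\!\left(r,\tfrac1{f-s_1}\right)+\ol N\!\left(r,\tfrac1{f-s_2}\right)+\ol N\!\left(r,\tfrac1{f-s_3}\right)+S(r,f), \]
forces the whole chain of estimates to be asymptotic equalities: the multiple $s_1$- and $s_2$-points are essentially exactly double, the $s_3$-points are essentially simple, and $f$ attains $s_1,s_2,s_3$ with full frequency. This rigid ramification pattern is exactly that of the involution; analysing the residues of the relevant points, equivalently examining the auxiliary function $f'/[(f-s_1)(f-s_2)]-g'/[(g-s_1)(g-s_2)]$, whose polar parts now fail to cancel unless $g$ interchanges the roles of $s_1$ and $s_2$, shows that $\alpha$ must be the constant $-1$ and, matching the partial-fraction data, that the interchange is consistent only when $s_1+s_2=2s_3$. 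Thus $\mathcal{L}_c(f)\equiv -f+2s_3$ with $s_1+s_2=2s_3$. Establishing this second alternative rigorously, ruling out every intermediate configuration and pinning down $s_1+s_2=2s_3$, is the main obstacle, precisely because $k=1$ is the borderline case in which the second main theorem yields equality rather than a contradiction.

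Finally, for the furthermore part I would solve the two functional equations directly. From $\mathcal{L}_c(f)\equiv f$ one gets $f(z+c)=\frac{1-c_0}{c_1}f(z)$; setting $f(z)=\left(\frac{1-c_0}{c_1}\right)^{z/c}g(z)$ turns this into $g(z+c)=g(z)$, giving the stated form with $g$ being $c$-periodic. From $\mathcal{L}_c(f)\equiv -f+2s_3$ one gets the first-order linear difference equation $f(z+c)=\frac{-1-c_0}{c_1}f(z)+\frac{2}{c_1}s_3$; its homogeneous part contributes the factor $\left(\frac{-1-c_0}{c_1}\right)^{z/c}$ times a $c$-periodic function (using $s_3(z+c)=s_3(z)$), and a particular solution proportional to $s_3$ accounts for the inhomogeneity, yielding the asserted representation $f(z)=\left(\frac{-1-c_0}{c_1}\right)^{z/c}g(z)+2s_3$, whose validity is confirmed by direct substitution.
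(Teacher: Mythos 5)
Your opening matches the paper's proof in substance: the paper also extracts from the $CM$ sharing of $s_3$ and $\Theta(\infty,f)=1$ a small function $\beta:=(\mathcal{L}_c(f)-s_3)/(f-s_3)$ (it writes $\beta=e^{\mathcal{H}(z)}g(z)/g(z+c)$ with $g$ the canonical product of the poles of $f$ and $\rho(\mathcal{H})<1$, rather than using your additive decomposition, so the troublesome term $(c_0+c_1-1)s_3/(f-s_3)$ that you rightly flag never enters its argument), and it then observes that every simple $s_1$- or $s_2$-point of $f$ is a zero of $\beta-1$, giving exactly your dichotomy: either $\beta\equiv 1$, i.e.\ $\mathcal{L}_c(f)\equiv f$, or $\ol N_{1)}\left(r,1/(f-s_j)\right)=S(r,f)$ for $j=1,2$. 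Your derivation of the explicit form of $f$ in case (i) also agrees with the paper.

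The genuine gap is the second alternative, and you concede it (``the main obstacle''). Your plan --- the borderline second main theorem for $\{s_1,s_2,s_3,\infty\}$ followed by ramification rigidity and a residue analysis of $f'/[(f-s_1)(f-s_2)]-g'/[(g-s_1)(g-s_2)]$ --- cannot be completed as described, because in this regime almost all $s_1$- and $s_2$-points of $f$ are multiple, and the hypothesis $\ol E_{1)}(s_j,f)\subseteq\ol E_{1)}(s_j,\mathcal{L}_c(f))$ carries no information at multiple points: there is nothing known about $\mathcal{L}_c(f)$ at those points for your polar parts to cancel against, the only remaining link being $\beta$ itself. The paper's key device, absent from your sketch, is to invert the relation:
\[
\mathcal{L}_c(f)-s_j=\beta\left(f-\tilde{s}_j\right),\qquad \tilde{s}_j:=\frac{s_j+(\beta-1)s_3}{\beta},\quad j=1,2,
\]
where $\tilde{s}_1,\tilde{s}_2$ are \emph{small} functions because $\beta$ is small; thus the $s_1$-points of $\mathcal{L}_c(f)$ are precisely the $\tilde{s}_1$-points of $f$. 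Now apply Yamanoi's theorem (Lemma 3.5 of the paper) to $f$ with the \emph{four} targets $\infty,s_1,s_2,\tilde{s}_1$: if $\tilde{s}_1\not\equiv s_2$ these are distinct (note $\tilde{s}_1\equiv s_1$ would force $\beta\equiv1$), and each finite target contributes at most $\frac{1}{2}T(r,f)+S(r,f)$ since its simple points are rare --- for $s_1,s_2$ by your argument, for $\tilde{s}_1$ because the paper also shows that simple $s_1$-points of $\mathcal{L}_c(f)$ are rare (its (4.14)). This yields $(2-\epsilon)T(r,f)\leq\frac{3}{2}T(r,f)+S(r,f)$, a strict contradiction instead of your borderline equality. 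Hence $\tilde{s}_1\equiv s_2$ and, symmetrically, $\tilde{s}_2\equiv s_1$; subtracting these two identities gives $\beta\equiv-1$, and then $s_1+s_2=2s_3$ and $\mathcal{L}_c(f)\equiv-f+2s_3$ follow by algebra on small functions. This fourth-small-target argument is the missing idea you would need to supply. A final slip: your claim that the representation in (ii) ``is confirmed by direct substitution'' is false; substituting it into $\mathcal{L}_c(f)\equiv-f+2s_3$ leaves the extra term $2(c_0+c_1)s_3$, so the correct particular solution is $2s_3/(1+c_0+c_1)$, which equals $2s_3$ only if $c_0+c_1=0$ or $s_3\equiv0$. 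That error is inherited from the paper's own statement, but an honest substitution would have exposed it.
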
 
	\begin{rem}
		We see that {Theorems \ref{th2.1}, \ref{th2.2}}\; and {\ref{th2.3}}\;
		directly improved, respectively, {Theorems E, F} and {G}.
	\end{rem}
	\begin{rem}
		We see from {Example \ref{ex2.1}}\; that, in {Theorem
			\ref{th2.3}},\; the possibility $ \mathcal{L}_c(f)\equiv -f+2s_3 $ could be	occurred.
	\end{rem}\par The following example shows that the restrictions on the growth of
	$ f $ in our above results are necessary and sharp.
	\begin{exm}
		Let $ f(z)=e^{g(z)} $, where $ g(z) $ is an entire function with $ \rho(g)=1 $, and hence for $ c_1={1}/{2}=c_0 $,  it is easy to see that $
		\mathcal{L}_{\pi}(z)={\left(e^{2g(z)}+1\right)}/{2e^{g(z)}} $. We
		choose $ s_1=1,\; s_2=-1 $ and $ s_3=\infty $. Clearly $ \Theta(\infty,f)=1 $, $
		\rho_2(f)=1 $,  $ f $ and $ \mathcal{L}_{\pi}(f) $ share $ s_3 $ $ CM $ and $ \ol
		E_{1)}(1,f)\subseteq\ol E_{1)}(1,\mathcal{L}_{\pi}(z)) $ and $ \ol
		E_{1)}(-1,f)\subseteq\ol E_{1)}(-1,\mathcal{L}_{\pi}(z)) $ but we see that
		neither $ \mathcal{L}_{\pi}(z)\not\equiv f $ nor $
		\mathcal{L}_{\pi}(z)\not\equiv -f+2s_3 $. Also the function has not the specific
		form. 
	\end{exm} \par 
	\begin{rem}
		The next example shows that the condition $ \Theta(\infty,f)=1 $ in {Theorem \ref{th2.3}} can not be omitted. 
	\end{rem}
	\begin{exm}
		Let $ f(z)=1/ \cos z $, $c_1=1$, $ c_0=0 $, $ s_1=1 $, $ s_2=-1 $ and $ s_3=0
		$. Clearly $ \Theta(\infty,f)=0 $, $ f $ and $ \mathcal{L}_{3\pi/2}(f) $ share $
		s_3 $ $ CM $, $ \ol E_{1)}(1,f)\subseteq\ol E_{1)}(1,\mathcal{L}_{3\pi/2}(z)) $
		and $ \ol E_{1)}(-1,f)\subseteq\ol E_{1)}(-1,\mathcal{L}_{3\pi/2}(z)) $.
		However, one may observe that neither $ \mathcal{L}_{3\pi/2}(z)\not\equiv f $ nor $
		\mathcal{L}_{3\pi/2}(z)\not\equiv -f+2s_3 $. Also the function has not the specific
		form.
	\end{exm} 
	
	
	\section{Key lemmas} 
	In this section, we present some necessary lemmas which will play key role to prove the main results. For a non-zero complex number $ c $ and for integers $ n\geq 1 $, we define the higher order difference operators $ \Delta_c^nf:=\Delta_c^{n-1}(\Delta_c f) $.
	\begin{lem}\cite{Yan-FE-1980}\label{lem3.1}
		Let $ c\in\mathbb{C} $, $ n\in\mathbb{N} $, let $ f $ be a meromorphic function
		of finite order. Then any small periodic function $ a\equiv
		a(z)\in\mathcal{S}(f) $ \beas 
		m\left(r,\frac{\Delta_c^nf}{f(z)-a(z)}\right)=S(r,f), \eeas where the	exponential set associated with $ S(r,f) $ is of at most finite logarithmic
		measure.
	\end{lem}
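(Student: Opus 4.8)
The plan is to reduce the statement to the case $a\equiv 0$ by exploiting periodicity, and then to bootstrap the general $n$ from the first-order difference analogue of the logarithmic derivative lemma. First I would use that $a$ has period $c$: then $\Delta_c a\equiv 0$, hence $\Delta_c^n a\equiv 0$, and by the linearity of $\Delta_c$ it follows that $\Delta_c^n f=\Delta_c^n(f-a)$. Writing $g:=f-a$, the hypothesis $a\in\mathcal{S}(f)$ gives $T(r,g)=T(r,f)+S(r,f)$, so $g$ is again of finite order and $S(r,g)=S(r,f)$. Consequently $\Delta_c^n f/(f-a)=\Delta_c^n g/g$, and it suffices to prove $m\left(r,\Delta_c^n g/g\right)=S(r,g)$.

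For the base case $n=1$ I would invoke the difference analogue of the lemma on the logarithmic derivative of Chiang--Feng and Halburd--Korhonen (cited in the Introduction): for a finite-order meromorphic function $g$ and $c\in\mathbb{C}^{*}$ one has $m\left(r,g(z+c)/g(z)\right)=S(r,g)$. Since $\Delta_c g/g=g(z+c)/g(z)-1$, the subadditivity of the proximity function (together with $m(r,1)=0$) immediately yields $m\left(r,\Delta_c g/g\right)=S(r,g)$.

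For general $n$ I would telescope. Setting $\Delta_c^0 g:=g$, write
\[ \frac{\Delta_c^n g}{g}=\prod_{k=1}^{n}\frac{\Delta_c^{k} g}{\Delta_c^{k-1} g}=\prod_{k=1}^{n}\frac{\Delta_c\left(\Delta_c^{k-1} g\right)}{\Delta_c^{k-1} g}. \]
Each factor has the form $\Delta_c h/h$ with $h=\Delta_c^{k-1} g$, so the first-order estimate applies and gives $m\left(r,\Delta_c h/h\right)=S(r,h)$. To convert each $S\left(r,\Delta_c^{k-1} g\right)$ into $S(r,f)$, I would observe that $\Delta_c^{k-1} g=\sum_{i=0}^{k-1}(-1)^{k-1-i}\binom{k-1}{i}g(z+ic)$ is a finite linear combination of shifts of $g$; by the finite-order shift invariance $T\left(r,g(z+ic)\right)=T(r,g)+S(r,g)$ one obtains $T\left(r,\Delta_c^{k-1} g\right)=O(T(r,g))$, whence $S\left(r,\Delta_c^{k-1} g\right)=S(r,g)=S(r,f)$. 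Applying $m\left(r,\prod_k h_k\right)\le\sum_k m(r,h_k)+O(1)$ and summing the finitely many terms then gives $m\left(r,\Delta_c^n g/g\right)=S(r,g)=S(r,f)$, as required.

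The main obstacle is precisely the bookkeeping of the error terms: the first-order lemma naturally produces an $S\left(r,\Delta_c^{k-1} g\right)$ rather than an $S(r,f)$, and one must justify that all these intermediate error terms collapse to $S(r,f)$. This is where the finite-order hypothesis is essential, since it is exactly what guarantees $T\left(r,g(z+c)\right)=T(r,g)+S(r,g)$ and hence the comparability of the characteristics of all the iterated differences; the bound on the exceptional set of finite logarithmic measure is inherited verbatim from the exceptional set appearing in the first-order difference lemma.
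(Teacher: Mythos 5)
The paper offers no proof of this lemma to compare against: it is imported as a known result, attributed to Yanagihara \cite{Yan-FE-1980}, and used as a black box. Judged on its own, your argument is correct and is the standard derivation of this fact in difference Nevanlinna theory, built from exactly the tool the paper itself records as Lemma~\ref{lem3.4} (the Halburd--Korhonen/Chiang--Feng difference analogue of the logarithmic-derivative lemma). The reduction $\Delta_c^n f=\Delta_c^n(f-a)$ via the $c$-periodicity of $a$, the base case $m\left(r,\Delta_c g/g\right)\leq m\left(r,g(z+c)/g(z)\right)+O(1)$, the telescoping product, and the collapse of each error term $S\left(r,\Delta_c^{k-1}g\right)$ into $S(r,f)$ through $T\left(r,g(z+ic)\right)=T(r,g)+S(r,g)$ are all sound, and the union of finitely many exceptional sets of finite logarithmic measure again has finite logarithmic measure, so the exceptional-set clause is inherited as you claim. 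Two small points would make the write-up airtight: first, the telescoping product is undefined if some intermediate difference $\Delta_c^{k-1}g$ vanishes identically, but then $\Delta_c^{n}g\equiv 0$ and the conclusion holds trivially, so a one-line remark disposes of this degenerate case; second, you use without comment that $g=f-a$ and each $\Delta_c^{k-1}g$ have finite order --- this is true (a small function of a finite-order function has finite order, and finite linear combinations of shifts preserve finite order), but it deserves a sentence, since the first-order lemma is only available under that hypothesis.
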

	\begin{lem}\cite{Moh-FFA-1971,Val-BSM-59}\label{lem3.2}
		If $ \mathcal{R}(f) $ is rational in $ f $ and has small meromorphic
		coefficients, then \beas T(r,\mathcal{R}(f))=\deg_f(\mathcal{R})T(r,f)+S(r,f).
		\eeas
	\end{lem}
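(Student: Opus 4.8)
The statement is the Valiron--Mohon'ko theorem, so the plan is to prove the two inequalities $T(r,\mathcal{R}(f))\le\deg_f(\mathcal{R})\,T(r,f)+S(r,f)$ and $T(r,\mathcal{R}(f))\ge\deg_f(\mathcal{R})\,T(r,f)+S(r,f)$ separately. First I would fix a normal form: write $\mathcal{R}(f)=P(z,f)/Q(z,f)$ with $P(z,w)=\sum_{i=0}^{p}a_i(z)w^i$ and $Q(z,w)=\sum_{j=0}^{q}b_j(z)w^j$ coprime as polynomials in $w$ over the field $\mathcal{S}(f)$ of small functions, $a_pb_q\not\equiv 0$, and $d:=\deg_f(\mathcal{R})=\max\{p,q\}$. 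The backbone of everything is the polynomial special case, $T(r,P(z,f))=p\,T(r,f)+S(r,f)$ for a polynomial $P$ of degree $p$ in $f$ with small coefficients; I would isolate its easy (upper) half first and then bootstrap to the rational case.

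For the upper bound I would argue on the proximity and counting functions. Subadditivity of $m$ gives $m(r,P)\le p\,m(r,f)+S(r,f)$, while a pole of $f$ of order $\mu$ is, off the negligible set of zeros and poles of the coefficients, a pole of $P$ of order exactly $p\mu$, so $N(r,P)=p\,N(r,f)+S(r,f)$; hence $T(r,P)\le p\,T(r,f)+S(r,f)$. Passing to $\mathcal{R}(f)=P/Q$, I would use the pole structure of the degree-$d$ rational map $w\mapsto\mathcal{R}(z,w)$: its poles on the $w$-sphere (the $q$ zeros $\beta_k(z)$ of $Q$, together with a pole of order $(p-q)^+$ at $w=\infty$) total multiplicity exactly $d$, so by the First Main Theorem $N(r,\mathcal{R}(f))=\sum_k N(r,1/(f-\beta_k))+(p-q)^+N(r,f)+S(r,f)\le d\,T(r,f)+S(r,f)$; coprimality of $P,Q$ is what guarantees these $\beta_k$ give genuine, uncancelled poles of $\mathcal{R}(f)$. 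A matching estimate for $m$ via the partial-fraction decomposition of $\mathcal{R}(z,w)$ in $w$ then yields $T(r,\mathcal{R}(f))\le d\,T(r,f)+S(r,f)$.

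The reverse inequality is the crux, and the place coprimality really earns its keep. For a generic constant $w_0$, $\mathcal{R}(f)-w_0=(P-w_0Q)/Q$, where $P-w_0Q$ is again a polynomial of degree exactly $d$ in $f$ with small coefficients. Since $\gcd(P-w_0Q,Q)=\gcd(P,Q)=1$, the zeros of $P-w_0Q$ are, up to $S(r,f)$, precisely the $w_0$-points of $\mathcal{R}(f)$, so $N(r,1/(\mathcal{R}(f)-w_0))=N(r,1/(P-w_0Q))+S(r,f)$. Factoring $P-w_0Q=c_d\prod_{k=1}^{d}(f-\gamma_k)$ over the small algebroid functions $\gamma_k$ (the $d$ preimages of $w_0$ under $\mathcal{R}$) gives $N(r,1/(P-w_0Q))=\sum_{k=1}^{d}N(r,1/(f-\gamma_k))+S(r,f)$. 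I would then pick $\ell$ distinct generic values $w_0^{(1)},\dots,w_0^{(\ell)}$, so that the $\ell d$ roots are pairwise distinct small functions, and apply the Second Main Theorem with moving targets to $f$ against these roots:
\beas (\ell d-2)\,T(r,f)\le\sum_{i=1}^{\ell}\sum_{k}N\!\left(r,\frac{1}{f-\gamma_k^{(i)}}\right)+S(r,f)=\sum_{i=1}^{\ell}N\!\left(r,\frac{1}{P-w_0^{(i)}Q}\right)+S(r,f). \eeas
Combining this with $N(r,1/(\mathcal{R}(f)-w_0^{(i)}))\le T(r,\mathcal{R}(f))+O(1)$ (First Main Theorem) and the equality above yields $(\ell d-2)\,T(r,f)\le\ell\,T(r,\mathcal{R}(f))+S(r,f)$, i.e. $T(r,\mathcal{R}(f))\ge(d-2/\ell)\,T(r,f)+S(r,f)$ for every fixed $\ell$.

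I expect the main obstacle to be extracting the exact constant $d$ rather than merely $d-2/\ell$: since $\ell$ is arbitrary and, for each fixed $\ell$, the error term is genuinely $S(r,f)$, the two-sided bound forces $T(r,\mathcal{R}(f))-d\,T(r,f)$ to be $o(T(r,f))$, which is precisely $S(r,f)$; stating and justifying this limiting step cleanly is the delicate point, as is checking that the finitely many degenerate $w_0$ (where the degree drops, or $a_p-w_0b_q\equiv0$ when $p=q$, or where the $\gamma_k$ collide) contribute only to $S(r,f)$, and that the moving-target Second Main Theorem genuinely applies to the algebroid small targets $\gamma_k^{(i)}$. An alternative that sidesteps the moving-target machinery is the resultant route: coprimality gives a B\'ezout identity $A(z,w)P(z,w)+B(z,w)Q(z,w)=D(z)$ with $D\in\mathcal{S}(f)\setminus\{0\}$ and $\deg_wA<q$, $\deg_wB<p$, which upon substituting $w=f$ and $P(z,f)=\mathcal{R}(f)\,Q(z,f)$ expresses $Q(z,f)$ through $\mathcal{R}(f)$ and bounds $T(r,f)$ by $T(r,\mathcal{R}(f))$; here too the degree bookkeeping needed to land on exactly $d$ is the part demanding the most care.
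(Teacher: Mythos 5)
You cannot really be compared against the paper here, because the paper does not prove Lemma 3.2 at all: it quotes it as the classical Valiron--Mohon'ko theorem, with citations to Valiron (1931) and Mohon'ko (1971). The classical proof is entirely elementary. One first settles the polynomial case $T(r,P(z,f))=p\,T(r,f)+S(r,f)$ by direct estimation: your upper half, plus a splitting of the circle $|z|=r$ into the set where $|f|$ is large enough that $|P(z,f)|\ge |a_p||f|^p/2$ and its complement, which yields the missing lower bound $m(r,P)\ge p\,m(r,f)-S(r,f)$. The rational case is then reduced to the polynomial case by a M\"obius substitution in $f$ that normalises the degrees, division with remainder, and a B\'ezout identity to control cancellation; no second main theorem of any kind enters.

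Your route is genuinely different, and it has a genuine gap at its crux. The lower bound hinges on applying the second main theorem with moving targets (the paper's Lemma 3.5, Yamanoi's theorem) to the roots $\gamma_k^{(i)}$ of $P-w_0^{(i)}Q$. These roots are in general \emph{algebroid} functions of $z$, not meromorphic ones, whereas Lemma 3.5 requires targets in $\hat{\mathcal{S}}(f)$, i.e.\ small \emph{meromorphic} functions; so the theorem you invoke does not apply as stated. You flag this yourself, but it is not a removable technicality: repairing it needs either an SMT for algebroid small targets or a passage to a finite branched covering on which the $\gamma_k^{(i)}$ become single-valued, after which $T(r,f)$, $S(r,f)$ and the exceptional sets must all be re-related --- work comparable to proving the lemma itself. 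Two further points. First, second main theorems with small targets in the Steinmetz tradition are themselves proved \emph{using} Valiron--Mohon'ko, so unless you rely specifically on Yamanoi's independent geometric proof there is a real risk of circularity (and deriving a 1971 elementary identity from Yamanoi's 2004 theorem is in any case methodologically backwards). Second, your fallback resultant argument does not work as sketched: from $Q(z,f)\bigl(A\,\mathcal{R}(f)+B\bigr)=D$, subadditivity of $T$, and $\deg_f A\le q-1$, $\deg_f B\le p-1$, one only obtains $T(r,\mathcal{R}(f))\ge (2-p)\,T(r,f)-S(r,f)$, which is vacuous for $p\ge 2$ and in any case far short of $d\,T(r,f)$; landing on $d$ there requires exactly the polynomial-case lower bound that the classical proof starts from. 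By contrast, the $\ell$-diagonalisation you single out as delicate is fixable by a standard argument (choose radii $R_\ell\uparrow\infty$ so that the tails of the exceptional sets have measure less than $2^{-\ell}$ and use the index-$\ell$ bound on $[R_\ell,R_{\ell+1})$), and the algebroid/branch issue in your upper bound is avoidable by estimating $N(r,1/Q(z,f))$ and $m(r,\mathcal{R}(f))$ as wholes rather than branch by branch. The decisive obstruction in your proposal is the application of Lemma 3.5 to non-meromorphic targets, not the limit over $\ell$.
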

	\begin{lem}\cite{Yan & Yi-2003}\label{lem3.3}
		Suppose that $ h $ is a non-constant entire function such that $ f(z)=e^{h(z)}
		$, then $ \rho_2(f)=\rho(h) $.
	\end{lem}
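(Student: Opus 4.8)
The plan is to compare the growth of $f=e^{h}$ and of $h$ through the maximum modulus, using the elementary identity $\log M(r,f)=\max_{|z|=r}\Re h(z)$, which holds because $|e^{h}|=e^{\Re h}$. Since for an entire function $F$ one has the two--sided bounds $T(r,F)\le\log^{+}M(r,F)$ and $\log^{+}M(r,F)\le\frac{R+r}{R-r}\,T(R,F)$ for $0<r<R$ (Poisson--Jensen), and since both $\rho(h)$ and $\rho_{2}(f)$ are $\limsup$'s of logarithmic ratios that are insensitive to bounded dilations $r\mapsto\lambda r$ and to additive $O(1)$ terms, it is enough to show that $\log^{+}T(r,f)$ and $T(r,h)$ are comparable up to such harmless modifications. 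I would prove the two resulting inequalities separately.

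For $\rho_{2}(f)\le\rho(h)$ I first bound $T(r,f)\le\log^{+}M(r,f)=\big(\max_{|z|=r}\Re h\big)^{+}\le M(r,h)$. Applying the Poisson--Jensen bound to $h$ with $R=2r$ gives $\log^{+}M(r,h)\le 3\,T(2r,h)$, hence $\log^{+}T(r,f)\le 3\,T(2r,h)+O(1)$ and therefore $\log^{+}\log^{+}T(r,f)\le\log^{+}T(2r,h)+O(1)$. Dividing by $\log r$ and letting $r\to\infty$ yields $\rho_{2}(f)\le\rho(h)$, because replacing $r$ by $2r$ does not change the value of the $\limsup$.

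The reverse inequality $\rho_{2}(f)\ge\rho(h)$ is the delicate half, and its main obstacle is that the Nevanlinna characteristic of $e^{h}$ only ``sees'' the real part of $h$, so one must recover the full size $M(r,h)$ of $h$ from $\max_{|z|=r}\Re h$. This is exactly what the Borel--Carath\'eodory theorem provides: $\max_{|z|=r}\Re h\ge\tfrac12\,M(r/2,h)-O(1)$. On the other hand $\max_{|z|=r}\Re h=\log M(r,f)=\log^{+}M(r,f)\le 3\,T(2r,f)$ for large $r$ by Poisson--Jensen applied to $f$ with $R=2r$. Combining the two estimates gives $T(2r,f)\ge\tfrac16\,M(r/2,h)-O(1)$, so that $\log^{+}T(2r,f)\ge\log^{+}M(r/2,h)-O(1)\ge T(r/2,h)-O(1)$, the last step again by $T\le\log^{+}M$ for the entire function $h$. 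Passing to a second logarithm and then to the $\limsup$ of $\log^{+}\log^{+}T(2r,f)/\log r$ gives $\rho_{2}(f)\ge\rho(h)$, and together with the previous paragraph this establishes $\rho_{2}(f)=\rho(h)$. The only genuinely non-routine ingredient is the Borel--Carath\'eodory step; everything else is bookkeeping with the standard $T$--$M$ inequalities, and the hypothesis that $h$ is non-constant is used only to guarantee $M(r,h)\to\infty$, so that the $O(1)$ terms and the distinction between $\log$ and $\log^{+}$ are immaterial.
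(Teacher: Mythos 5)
The paper never proves this lemma: it is imported wholesale from Yang and Yi's monograph \cite{Yan & Yi-2003}, so there is no internal argument to compare yours against, and I can only judge your proof on its own terms. It is correct, and it is essentially the classical argument one finds in the literature for this fact. The easy half is fine: $T(r,f)\le\log^{+}M(r,f)=\bigl(\max_{|z|=r}\Re h\bigr)^{+}\le M(r,h)$ together with $\log^{+}M(r,h)\le 3\,T(2r,h)$ gives $\log^{+}\log^{+}T(r,f)\le\log^{+}T(2r,h)+O(1)$, and the $\limsup$'s defining $\rho$ and $\rho_{2}$ are indeed unchanged by $r\mapsto 2r$. You also correctly isolate the one non-formal ingredient in the converse direction: since $T(r,e^{h})$ only sees $\Re h$, one must pass from $\max_{|z|=r}\Re h$ back to $M(r/2,h)$, and Borel--Carath\'eodory, in the form $M(\rho,h)\le\frac{2\rho}{R-\rho}\max_{|z|=R}\Re h+\frac{R+\rho}{R-\rho}\,|h(0)|$ with $\rho=r/2$, $R=r$, does exactly that; note this form holds irrespective of the sign of $\max_{|z|=R}\Re h$ (apply the usual statement to $h-h(0)$), which is what legitimizes your step before one knows $\max_{|z|=r}\Re h>0$, and afterwards the chain $3T(2r,f)\ge\log^{+}M(r,f)\ge\tfrac12 M(r/2,h)-O(1)$ and $\log^{+}M(r/2,h)\ge T(r/2,h)$ goes through. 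Your closing remark is also the right one: non-constancy of $h$ forces $M(r,h)\to\infty$ and $T(r,h)\to\infty$, so all additive $O(1)$'s, the multiplicative constants, and the distinction between $\log$ and $\log^{+}$ are absorbed when the second logarithm and the division by $\log r$ are performed. In short: a complete and correct proof of a lemma the paper itself only cites.
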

	\par In \cite{Chi & Fen-2008,Hal & Kor-JMMA-2006}, the first difference analogue
	of the lemma on the logarithmic derivative was proved and for the hyper-order $
	\rho_2(f)<1 $, the following is the extension, see \cite{Hal & Kor &
		Toh-TAMS-2014}. 
	\begin{lem}\cite{Hal & Kor & Toh-TAMS-2014}\label{lem3.4}
		Let $ f $ be a non-constant finite order meromorphic function and $
		c\in\mathbb{C} $. If $ c $ is of finite order, then \beas
		m\left(r,\frac{f(z+c)}{f(z)}\right)=O\left(\frac{\log r}{r} T(r,f)\right) \eeas
		for all $ r $ outside of a set $ E $ with zero logarithmic density. If the hyper
		order $ \rho_2(f)<1 $, then for each $ \epsilon>0 $, we have \beas
		m\left(r,\frac{f(z+c)}{f(z)}\right)=0\left(\frac{T(r,f)}{r^{1-\rho_2-\epsilon}}\right)
		\eeas for all $ r $ outside of a set of finite logarithmic measure.
	\end{lem}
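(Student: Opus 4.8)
The plan is to follow the approach of Halburd, Korhonen and Tohge and reduce the stated proximity estimate to a pointwise bound extracted from the Poisson--Jensen representation. Fix $z$ with $|z|=r$ and an auxiliary radius $R>r$. Writing $a_\mu$ for the zeros and $b_\nu$ for the poles of $f$ inside $|w|<R$, the Poisson--Jensen formula expresses $\log|f(z)|$ as a Poisson integral of $\log|f(Re^{i\theta})|$ against the Poisson kernel plus a Blaschke-type sum over the $a_\mu$ and $b_\nu$. Subtracting the representations of $\log|f(z+c)|$ and $\log|f(z)|$, the quantity $\log\left|f(z+c)/f(z)\right|$ splits into the difference of the two Poisson integrals and the difference of the two Blaschke sums. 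The first piece I would control by estimating the difference of Poisson kernels evaluated at $z+c$ and at $z$: since the kernel is smooth away from the boundary circle, this difference is $O\left(|c|\,R/(R-r)^2\right)$, so integrating against $\log^+|f(Re^{i\theta})|$ yields a contribution of size $O\left(R\,T(R,f)/(R-r)^2\right)$.

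For the Blaschke part I would use the elementary inequality $\left|\log\left|(z+c-a)/(z-a)\right|\right|\le |c|/\mathrm{dist}(z,\{a\})+\ldots$ together with the standard counting estimate $n(R,f)+n(R,1/f)=O\left(T(\rho,f)/(\rho-R)\right)$ for $\rho$ slightly larger than $R$, summing the contributions of all zeros and poles inside $|w|<R$. The points $z$ on $|z|=r$ that lie abnormally close to some zero or pole of $f$ form a set of small angular measure, so after excising them the integral over the circle of the resulting bound is controlled by the counting functions, hence by $T(R,f)$ up to logarithmic factors. Assembling the Poisson part and the Blaschke part gives a bound that, upon integration over $|z|=r$, furnishes $m\left(r,f(z+c)/f(z)\right)$, and the symmetric argument applied to $f(z)/f(z+c)$ handles the reciprocal.

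Everything then hinges on the choice of the auxiliary radius $R=R(r)$. In the finite-order case one may take $R=2r$ (or $R=r+r^{\beta}$); since $T(R,f)=O(r^{\rho+\epsilon})$, the estimates above collapse to the stated $O\left((\log r/r)\,T(r,f)\right)$ bound for all $r$ outside a set of zero logarithmic density. The genuinely delicate point, and the main obstacle, is the hyper-order $\rho_2(f)<1$ regime: here one must take $R=r+\eta(r)$ with $\eta(r)$ decaying like a small power of $r$, yet still guarantee $T(R,f)\le(1+o(1))\,T(r,f)$. This is where I would invoke a Borel-type growth lemma adapted to hyper-order less than one, asserting that $T\left(r+r^{s},f\right)=T(r,f)+o\left(T(r,f)/r^{1-\rho_2-\epsilon}\right)$ for all $r$ outside a set of finite logarithmic measure. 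Controlling the increment of $T$ under this shift of radius, while keeping the exceptional set of finite logarithmic measure, is the technical heart of the proof; feeding this growth lemma into the radius optimisation of the two preceding steps yields the claimed bound $m\left(r,f(z+c)/f(z)\right)=o\left(T(r,f)/r^{1-\rho_2-\epsilon}\right)$ off a set of finite logarithmic measure, completing the argument.
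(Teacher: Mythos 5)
You should first note that the paper itself contains no proof of this lemma: it is imported verbatim from Halburd--Korhonen--Tohge (2014), with the Borel-type growth lemma you invoke at the end recorded separately by the author as Lemma \ref{lem3.7}. Your sketch reconstructs essentially the same argument as that cited source --- Poisson--Jensen decomposition of $\log\left|f(z+c)/f(z)\right|$ into a kernel-difference part and a Blaschke part, counting-function estimates off small exceptional sets, and radius optimisation fed by the hyper-order growth lemma --- and is correct in outline, up to one directional slip: the increment $R-r=\eta(r)$ must \emph{grow} like $r^{s}$ for suitable $s\in(0,1)$ (it is small only relative to $r$), rather than decay.
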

	\begin{lem}\cite{Yamanoi-AM-2004}\label{lem3.5}
		Let $ f $ be a non-constant meromorphic function, $ s_j\in\hat{\mathcal{S}}(f)
		$, $ j=1, 2, ..., q,\;$ $ (q\geq 3) $. Then for any positive real number $
		\epsilon $, we have \beas (q-2-\epsilon)T(r,f)\leq\sum_{j=1}^{q}\ol
		N\left(r,\frac{1}{f-s_j}\right),\; r\not\in E, \eeas where $ E\subset [0,\infty)
		$ and satisfies $\displaystyle \int_{E}d\log \log r<\infty $.
	\end{lem}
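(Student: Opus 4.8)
The plan is to recognize this as Yamanoi's truncated Second Main Theorem for small targets and to prove it in two stages: first the easier non-truncated estimate, then the passage to the reduced counting functions $\ol N$, which is the genuinely hard part. Throughout, the small functions $s_j$ are treated as coefficients: since $T(r,s_j)=o(T(r,f))$, every proximity and characteristic quantity built from the $s_j$ alone is absorbed into $S(r,f)$, and poles are handled by reading $\ol N(r,1/(f-s_j))$ as $\ol N(r,f)$ when $s_j=\infty$.

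First I would establish the non-truncated form
\beas (q-2-\epsilon)T(r,f)\le \sum_{j=1}^{q} N\left(r,\frac{1}{f-s_j}\right)+S(r,f) \eeas
following the moving-target reduction of Steinmetz and Osgood. The idea is to form suitable Wronskian-type determinants in $f-s_1,\dots,f-s_q$ (equivalently, to pass to the quotients $(f-s_1)/(f-s_2)$, etc.), reduce the moving targets to the three fixed targets $0,1,\infty$ up to an $S(r,f)$ error, and then apply the classical Second Main Theorem together with the logarithmic-derivative lemma. The arbitrary $\epsilon>0$ and the exceptional set $E$ with $\int_{E}d\log\log r<\infty$ enter exactly here, through the standard Borel-type (calculus) lemma used to discard the error terms outside a set of finite $\log\log$-measure.

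The hard part is to replace each $N(r,1/(f-s_j))$ by the reduced $\ol N(r,1/(f-s_j))$, i.e. to show that the total ramification contribution $\sum_{j}\bigl(N-\ol N\bigr)(r,1/(f-s_j))$, together with the ordinary ramification of $f$, can be absorbed by $\epsilon\,T(r,f)+S(r,f)$. For this I would follow Yamanoi's geometric method: set up Ahlfors' theory of covering surfaces for $z\mapsto f(z)$ relative to the moving points $s_1(z),\dots,s_q(z)$ on the Riemann sphere, so that the mean sheet number $\bar S(r)$ is comparable to $T(r,f)$ while the boundary-length term is $S(r,f)$. Ahlfors' second fundamental theorem for covering surfaces then yields an Euler-characteristic inequality whose defect is the truncated ramification, and the crucial step---Yamanoi's central innovation---is a sharp geometric/combinatorial estimate controlling how the multiple $s_j$-points are distributed, bounding the genuinely truncated ramification counting function from above by $\epsilon\,T(r,f)+S(r,f)$.

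I expect this last estimate to be the main obstacle. In the fixed-target setting the classical Second Main Theorem already controls the ramification via the term $N_1(r)$; for moving targets the naive analogue is false, and one needs Yamanoi's delicate analysis of the ``proximity to the diagonal'' and of the number of points where $f$ clusters near several $s_j$ simultaneously, an argument with no counterpart in the fixed-target proof. Once this truncated ramification bound is in hand, combining it with the non-truncated estimate and readjusting $\epsilon$ gives the stated inequality, with the exceptional set $E$ satisfying $\int_{E}d\log\log r<\infty$ coming from the same calculus lemma used throughout.
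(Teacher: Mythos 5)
The paper offers no proof of this lemma at all: it is quoted verbatim from Yamanoi (Acta Math.\ \textbf{192} (2004), 225--294) and used as a black box, which is the universal practice, since no short proof of the truncated second main theorem for small functions is known. So the only meaningful question is whether your sketch would stand on its own, and it would not. Your outline is an accurate roadmap of the literature --- the non-truncated inequality via the Steinmetz--Osgood moving-target reduction is indeed classical, and the passage from $N$ to $\ol N$ is indeed exactly what was open until Yamanoi --- but the decisive step is only named, never performed. The sentence in which you bound the truncated ramification contribution by $\epsilon\,T(r,f)+S(r,f)$ ``following Yamanoi's geometric method'' is precisely the content of the entire Acta paper: it requires, among other things, a version of Ahlfors' covering-surface theory adapted to targets moving with $z$, a semi-local analysis of regions where $f$ is simultaneously close to two of the $s_j$ (your ``proximity to the diagonal''), and a combinatorial estimate on trees of island components that has no counterpart in the fixed-target theory. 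None of this is reduced in your sketch to statements one could verify; you yourself flag it as ``the main obstacle,'' which is a candid admission that the proof is not there. A roadmap that correctly points at a seventy-page argument is attribution, not proof.

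A smaller point: even your first stage is not quite as routine as stated. The Steinmetz-type reduction gives, for each $\epsilon>0$, the non-truncated inequality with an $S(r,f)$ error outside an exceptional set of finite linear (or logarithmic) measure; the specific exceptional set $E$ with $\int_{E}d\log\log r<\infty$ appearing in the statement is part of Yamanoi's own formulation and does not simply drop out of the standard Borel lemma applied to that reduction, so it cannot be claimed to ``enter exactly here.'' In short: you have correctly identified which theorem this is and where its proof lives, but as a blind attempt the proposal has a genuine gap --- the core of Yamanoi's argument --- and the honest course, which the paper itself takes, is to cite the result rather than to claim a proof of it.
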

We now prove the following lemma, a similar proof of this lemma can also be found in \cite{Aha & RM & 2019}.
	\begin{lem}\label{lem3.6}
		Let $ f $ be a non-constant meromorphic function such that \beas \ol
		E(s_j,f)\subseteq \ol E(s_j,c_1f(z+c)+c_0f(z)),\;\; j=1, 2,\eeas where $ s_1,
		s_2\in\mathcal{S}(f) $, $ c,\; c_0,\; c_1(\neq 0)\in\mathbb{C^{*}} $, then $ f $
		is not a rational. 
	\end{lem}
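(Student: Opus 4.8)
The plan is to argue by contradiction: I would assume that $f$ is a non-constant rational function and show that the hypotheses cannot all hold. First I would record that a non-constant rational function has $T(r,f)=d\log r+O(1)$ with $d=\deg f\ge 1$, so any $s\in\mathcal S(f)$ satisfies $T(r,s)=o(\log r)$ and is therefore forced to be a constant. Thus $s_1,s_2$ are (distinct) complex constants, $\mathcal L_c(f):=c_1f(z+c)+c_0f(z)$ is again rational, and by Lemma \ref{lem3.2} its degree is at most $2d$; moreover the difference terms arising from Lemma \ref{lem3.4} are here $S(r,f)=o(\log r)$.

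The next step is to read the partial-sharing hypothesis pointwise. If $z_0$ is any zero of $f-s_j$, then $z_0\in\ol E(s_j,f)\subseteq\ol E(s_j,\mathcal L_c(f))$, so $c_1f(z_0+c)+c_0s_j=s_j$, which gives \beas f(z_0+c)=\frac{1-c_0}{c_1}\,s_j=:t_j. \eeas Hence the injective shift $z\mapsto z+c$ carries the set of distinct $s_j$-points of $f$ into the set of distinct $t_j$-points of $f$; in particular $\ol N\!\left(r,1/(f-s_j)\right)\le\ol N\!\left(r,1/(\mathcal L_c(f)-s_j)\right)$ and the number of distinct $s_j$-points is at most $\deg f$.

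The heart of the argument is to upgrade this to a functional identity. Applying the second main theorem (Lemma \ref{lem3.5}) to $f$ with the values $s_1,s_2,\infty$ shows that $f$ attains $s_1$ and $s_2$ at the maximal number of distinct finite points, so that the inclusion $\ol E(s_j,f)\subseteq\ol E(s_j,\mathcal L_c(f))$ cannot be proper: $f-s_j$ and $\mathcal L_c(f)-s_j$ must share all their zeros, whence $\mathcal L_c(f)-s_j=\lambda_j\,(f-s_j)$ for constants $\lambda_j$, $j=1,2$. Eliminating $\mathcal L_c(f)$ between these two relations and using $s_1\ne s_2$ forces $\lambda_1=\lambda_2=1$, that is $\mathcal L_c(f)\equiv f$, equivalently \beas f(z+c)=\frac{1-c_0}{c_1}\,f(z). \eeas Comparing the two sides at infinity (the finite limit, or the leading coefficient, of the rational function $f$, which is invariant under the shift) yields $\tfrac{1-c_0}{c_1}=1$, and then $f(z+c)\equiv f(z)$ — impossible for a non-constant rational function, since such a function would then take any given value at infinitely many points. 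This contradiction shows that $f$ cannot be rational.

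The hard part will be the passage to proportionality, precisely because the sharing here is only partial and reduced (\emph{one-sided} and \emph{multiplicity-blind}): one cannot deduce $\mathcal L_c(f)-s_j=\lambda_j(f-s_j)$ directly from $\ol E(s_j,f)\subseteq\ol E(s_j,\mathcal L_c(f))$. Making this rigorous needs Lemma \ref{lem3.5} to certify that $f-s_j$ carries the full complement of distinct zeros, and it requires separately disposing of the degenerate configurations — repeated zeros of $f-s_j$, an omitted value with $s_j=f(\infty)$, and the resonant coefficient cases $c_0+c_1\in\{0,1\}$ — before both the proportionality step and the concluding periodicity contradiction can be carried through uniformly.
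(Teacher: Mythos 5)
Your plan correctly reduces the small functions $s_1,s_2$ to constants and correctly reads the sharing hypothesis pointwise, but the step you yourself label ``the hard part'' is a genuine gap, not a deferred technicality, and it cannot be closed along the lines you indicate. Lemma \ref{lem3.5} bounds the \emph{sum} $\sum_j \ol N\left(r,1/(f-s_j)\right)$ from below; it does not say that each individual value $s_j$ is attained at $\deg f$ distinct points (ramification or omission can occur: $f(z)=z^2$ attains $0$ at one point; $f(z)=1/z+2$ omits $2$ entirely). Moreover, even if $f-s_j$ did have $\deg f$ distinct zeros, the inclusion $\ol E(s_j,f)\subseteq\ol E(s_j,\mathcal{L}_c(f))$ could still be proper, because $\mathcal{L}_c(f)$ is rational of degree up to $2\deg f$ and may have up to twice as many $s_j$-points, so no counting argument forces equality of the two sets. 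And even granting set equality, the identity $\mathcal{L}_c(f)-s_j=\lambda_j(f-s_j)$ with $\lambda_j$ a \emph{constant} requires the quotient $(\mathcal{L}_c(f)-s_j)/(f-s_j)$ to be free of zeros and of poles, i.e.\ matching multiplicities at the shared zeros and identical pole divisors; reduced, one-sided sharing gives neither. So the bridge from the hypothesis to $\mathcal{L}_c(f)\equiv f$ is entirely missing, and everything after it (the elimination of $\lambda_j$, the periodicity contradiction) rests on air.

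Worse, the degenerate configuration you propose to ``dispose of separately'' --- an omitted value, i.e.\ $s_j=f(\infty)$ --- is not removable: it contains counterexamples to the statement as printed. Take $f(z)=1/z+2$, $s_1=2$, $s_2=1$, $c=2$, $c_1=1$, $c_0=-2$. Then $\ol E(s_1,f)=\emptyset$, so that inclusion is vacuous, while $\ol E(s_2,f)=\{-1\}$ and $\mathcal{L}_c(f)(-1)=f(1)-2f(-1)=3-2=1=s_2$, so all hypotheses of the lemma hold although $f$ is a non-constant rational function. Hence no argument can derive the conclusion from the stated hypotheses alone, and in particular your plan of handling this case ``separately'' cannot succeed. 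For comparison, the paper's own proof takes an elementary route with no Nevanlinna theory: it writes $f=P/Q$ in lowest terms and runs a case analysis on zeros of $P$ and of $Q$, producing infinitely many zeros of a polynomial or contradicting coprimality. But that proof, too, begins by asserting that $\mathcal{L}_c(f)-s_j=(f-s_j)h(z)$ for a \emph{polynomial} $h$ --- exactly the same unjustified passage from reduced partial sharing to divisibility (in the example above the relevant quotient is $-(3z+4)/(z+2)$, not a polynomial). So your proposal and the paper stumble at the same place; your final paragraph correctly locates the obstacle, but locating it is not the same as overcoming it.
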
 
	\begin{proof}
We wish to prove this lemma by the method of contradiction. Let $ f $ be a rational	function. Then $ f(z)={P(z)}/{Q(z)} $ where $ P $ and $ Q
$ are two polynomials relatively prime to each other and $ P(z)Q(z)\not\equiv 0 $. Hence \bea\label{e1.1} E(0,P)\cap E(0,Q)=\phi \eea It is easy to see that\beas
		c_1f(z+c)+c_0f(z)&=&c_1\frac{P(z+c)}{Q(z+c)}+c_0\frac{P(z)}{Q(z)}\\&=&\frac{c_1P(z+c)Q(z)+c_0P(z)Q(z+c)}{Q(z+c)Q(z)}\\&=&\frac{P_1(z)}{Q_{1}(z)},\text{(say)} \eeas where $ P_1 $ and $ Q_1 $ are two relatively prime polynomials and $ P_1(z)Q_1(z)\not\equiv 0 $.\vspace{1mm} 
		\par Since $ \ol
		E(s_1,f)\subseteq\ol E(s_1,c_1f(z+c)+c_0f(z)) $ and $ f $ is a rational function, there  must exists a polynomial $ h(z) $ such that \beas
		c_1f(z+c)+c_0f(z)-s_1=(f-s_1)h(z), \eeas which can be re-written as \bea\label{e2.1}
		\frac{c_1P(z+c)Q(z)+c_0P(z)Q(z+c)}{Q(z+c)Q(z)}-s_1\equiv
		\left(\frac{P(z)}{Q(z)}-s_1\right)h(z).\eea
		We now discuss the following cases:\\
		\noindent{\bf{Case 1.}} Let $ P(z) $ is non-constant.\par  Then by the
		\textit{Fundamental Theorem of Algebra}, there exists $ z_0\in\mathbb{C} $ such
		that $ P(z_0)=0 $. Then it follows from (\ref{e2.1}) that \bea\label{e2.3}
		c_1\frac{P(z_0+c)}{Q(z_0+c)}\equiv(1-h(z_0))s_1^{0},  \eea where $
		s_1^{0}=s_1(z_0) $.\par 
		\noindent{\bf{Subcase 1.1.}} Let $ z_0\in\mathbb{C} $ be such that $ s_1(z_0)=0
		$.\par Then from (\ref{e2.3}), it is easy to see that $ P(z_0+c)=0 $. Then we can deduce
		from (\ref{e1.1}) that $ P(z_0+mc)=0 $ for all positive integer $ m $. However,
		this is impossible, and hence we conclude that the polynomial $ P(z) $ is a
		non-zero constant.\par
		\noindent{\bf{Subcase 1.2.}} Let $ z_0\in\mathbb{C} $ be such that $
		s_1(z_0)\neq 0 $. \par Then from (\ref{e2.3}), we obtain that \beas 
		P(z_0+c)\equiv\frac{s_1^{0}}{c_1}(1-h(z_0))Q(z_0+c).\eeas\par Next proceeding
		exactly same way as done in above, we obtain \bea\label{e2.4} 
		P(z_0+mc)\equiv\frac{s_1^{0}}{c_1}(1-h(z_0))Q(z_0+mc).\eea\par In view of (\ref{e2.3})
		and (\ref{e2.4}), a simple computation shows that \beas
		\frac{P(z_0+c)}{Q(z_0+c)}=\frac{P(z_0+mc)}{Q(z_0+mc)}\;\; \text{for all
			positive integers $ m $,} \eeas which contradicts the fact that $ E(0,P)\cap
		E(0,Q)=\phi $.\par 
		Therefore, we see that $ f(z) $ takes the form $
		f(z)={\eta}/{Q(z)} $, where $ P(z)=\eta=\text{constant}\; (\neq
		0).$\par 
		\noindent{\bf{Case 2.}} Let $ Q(z) $ be non-zero constant.\par Now
		\bea\label{e2.5} c_1f(z+c)+c_0f(z)&=&\frac{c_1\eta\;Q(z)+c_0\eta\;
			Q(z+c)}{Q(z+c)Q(z)}.\eea\par Since $ E(s_2,f)=E(s_2,c_1f(z+c)+c_0f(z)) $ then
		there exists a polynomial $ h_1(z) $ such that $
		c_1f(z+c)+c_0f(z)-s_2=(f-s_2)h_1(z),$  which can be written as \bea\label{e2.6} c_1\;Q(z)+c_0
		Q(z+c)\equiv \frac{\eta-s_2Q(z)}{d}h_1(z)Q(z+c).\eea\par Since $ Q(z) $ and hence $ Q(z+c) $ be non-constant polynomials, hence by the \textit{Fundamental Theorem of Algebra}, there exist $ z_0 $ and $ z_1 $ such that $ Q(z_0)=0=Q(z_1+c) $.
		\par \noindent{\bf{Subcase 2.1.}} When $ Q(z_0)=0 $, then from (\ref{e2.6}), we
		see that $ h_1(z_0)=-{c_0}/{\eta} $, which is absurd.\par 
		\par \noindent{\bf{Subcase 2.2.}} When $ Q(z_1+c)=0 $, then from (\ref{e2.6}),
		we get $ Q(z_1)=0 $, which is not possible.\par 
		Thus we conclude that $ Q(z) $ is a non-zero constant, say $ \eta_2 $. Thus we have
		$ f(z)={\eta}/{\eta_2} $, a constant, which is a contradiction. This completes the proof.
	\end{proof}
	\begin{lem}\cite{Hal & Kor & Toh-TAMS-2014}\label{lem3.7}
		Let $ T: [0,+\infty]\rightarrow [0,+\infty] $ be a non-decreasing continuous
		function, and let $ s\in (0,+\infty) $. If the hyper-order of $ T $ is strictly
		less than one, i.e., \beas \limsup_{r\rightarrow +\infty}\frac{\log ^{+}\log
			^{+} T(r)}{\log r}=\rho_2 <1,\eeas then \beas
		T(r+s)=T(r)+o\left(\frac{T(r)}{r^{1-\rho_2-\epsilon}}\right),\eeas where $
		\epsilon>0 $ and $ r\rightarrow\infty $, outside of a set of finite logarithmic
		measure.
	\end{lem}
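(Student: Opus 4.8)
The plan is to transfer the entire estimate to the function $U(r):=\log^{+}T(r)$, because the hyper-order hypothesis controls $U$ by a power of $r$ with exponent strictly less than $1$, whereas $T$ itself may grow doubly exponentially. First I would dispose of the trivial case in which $T$ is bounded (then $T(r+s)-T(r)\to 0$ and the assertion is immediate), so assume $T(r)\to\infty$ and hence $T(r)\geq 1$, $U(r)\geq 0$ for large $r$. Fix $\epsilon>0$ with $\rho_2+\epsilon<1$ and set $\delta=1-\rho_2-\epsilon\in(0,1)$. Writing $\phi(r):=U(r+s)-U(r)\geq 0$, the identity $T(r+s)-T(r)=T(r)\bigl(e^{\phi(r)}-1\bigr)$ shows that the desired conclusion $T(r+s)=T(r)+o\bigl(T(r)/r^{\delta}\bigr)$ is equivalent to $\phi(r)=o(r^{-\delta})$ off an exceptional set (once one knows $\phi(r)\to 0$ there). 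Thus everything reduces to proving $\phi(r)=o(r^{-\delta})$ outside a set of finite logarithmic measure.

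The key input is the definition of hyper-order: for every $\eta>0$ there is $r_{\eta}$ with $U(r)\leq r^{\rho_2+\eta}$ for $r\geq r_{\eta}$. I would then bound the logarithmically weighted integral of $\phi$ over a dyadic block $[R,2R]$ by a telescoping argument, valid for $R\geq s$,
\[
\int_{R}^{2R}\phi(r)\,dr=\int_{2R}^{2R+s}U(r)\,dr-\int_{R}^{R+s}U(r)\,dr\leq s\,U(2R+s)\leq s\,(2R+s)^{\rho_2+\eta},
\]
using that $U$ is non-decreasing and non-negative. Dividing by $R$ yields $\int_{R}^{2R}\phi(r)\,\frac{dr}{r}\leq C\,R^{\rho_2+\eta-1}$ for all large $R$, an estimate whose exponent is negative because $\rho_2+\eta<1$. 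This is the quantitative heart of the argument: averaged over each dyadic scale, the increment $\phi$ has size $R^{\rho_2+\eta-1}$, which is already $o(R^{-\delta})$ provided $\eta<\epsilon$.

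To pass from this average control to a pointwise bound, I would run a Chebyshev/Borel argument. Choose $\eta<\epsilon$ and then $\beta$ with $1-\rho_2-\epsilon<\beta<1-\rho_2-\eta$, and put $E:=\{r\geq r_{\eta}:\phi(r)>r^{-\beta}\}$. On $E\cap[R,2R]$ one has $\phi(r)>(2R)^{-\beta}$, whence
\[
\int_{E\cap[R,2R]}\frac{dr}{r}\leq (2R)^{\beta}\int_{R}^{2R}\phi(r)\,\frac{dr}{r}\leq C'\,R^{\beta+\rho_2+\eta-1},
\]
and the exponent $\beta+\rho_2+\eta-1$ is strictly negative by the choice of $\beta$. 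Summing this geometric bound over the scales $R=2^{k}r_{\eta}$ shows $\int_{E}\frac{dr}{r}<\infty$, i.e.\ $E$ has finite logarithmic measure. For $r\notin E$ we have $\phi(r)\leq r^{-\beta}=o(r^{-\delta})$ since $\beta>\delta$; in particular $\phi(r)\to 0$, so $e^{\phi(r)}-1=\phi(r)+O(\phi(r)^{2})=o(r^{-\delta})$ and the required estimate $T(r+s)-T(r)=T(r)\cdot o(r^{-\delta})$ follows off $E$.

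The one genuine obstacle is arranging the parameters so that a single set $E$ simultaneously has finite logarithmic measure and delivers the little-$o$ (not merely big-$O$) conclusion; this is precisely what forces the two-sided inequality $\delta<\beta<1-\rho_2-\eta$ together with $\eta<\epsilon$, and it uses $\rho_2<1$ in an essential way (for $\rho_2=1$ the dyadic-block integral no longer decays and the scheme breaks down). The remaining points—justifying the telescoping identity for a merely continuous non-decreasing $U$, and treating the bounded-$T$ case separately—are routine.
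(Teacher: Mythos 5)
The paper offers no proof of Lemma \ref{lem3.7} for you to be compared against: it is stated as a quotation from Halburd--Korhonen--Tohge \cite{Hal & Kor & Toh-TAMS-2014}, so your argument has to be judged on its own and against that source. Your main argument is correct. The reduction to $U=\log^{+}T$, the identity $T(r+s)-T(r)=T(r)\bigl(e^{\phi(r)}-1\bigr)$ (valid once $T\geq 1$), the telescoping bound $\int_{R}^{2R}\phi(r)\,dr\leq s\,U(2R+s)\leq s\,(2R+s)^{\rho_2+\eta}$, and the dyadic Chebyshev estimate with parameters $\eta<\epsilon$ and $1-\rho_2-\epsilon<\beta<1-\rho_2-\eta$ do produce a set $E$ of finite logarithmic measure off which $\phi(r)\leq r^{-\beta}=o\bigl(r^{-(1-\rho_2-\epsilon)}\bigr)$, whence the conclusion; the implicit restriction to $\rho_2+\epsilon<1$ is harmless, since the statement for small $\epsilon$ implies it for larger $\epsilon$. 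This route is also genuinely different from the original one, which in essence argues by contradiction: if the increment were at least $\alpha T(r)r^{\rho_2+\epsilon-1}$ on a set of infinite logarithmic measure, iterating $T(r_{n}+s)\geq\bigl(1+\alpha r_{n}^{\rho_2+\epsilon-1}\bigr)T(r_{n})$ along a sequence $r_{n+1}\geq r_{n}+s$ drawn from that set forces $\log T$ to outgrow the hyper-order hypothesis. Your version constructs the exceptional set explicitly rather than by contraposition, which is arguably more transparent.

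The one step that fails as written is your dismissal of the bounded case as ``immediate'' from $T(r+s)-T(r)\to 0$. When $T$ increases to a finite limit $L>0$ and $\epsilon<1-\rho_2$, the assertion demands the rate $T(r+s)-T(r)=o\bigl(r^{-(1-\rho_2-\epsilon)}\bigr)$, which mere convergence does not give: a bounded $T$ that climbs by $k^{-2}$ across $[2^{k},2^{k}+1]$ and is constant elsewhere has $T(r+s)-T(r)\approx k^{-2}\gg r^{-(1-\rho_2-\epsilon)}$ for $r$ just below $2^{k}$, so the pointwise estimate genuinely fails on an interval near every $2^{k}$, and the lemma survives there only because those intervals have summable logarithmic measure. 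The repair is supplied by your own machinery: run the same telescoping/Chebyshev scheme on $T$ itself, using the bound $T(2R+s)\leq L$ in place of $U(2R+s)\leq(2R+s)^{\rho_2+\eta}$, and note that $T(r)\geq T(r_{0})>0$ for all large $r$ unless $T\equiv 0$. With that one adjustment the proof is complete.
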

	\section{Proofs of the main results}
	In this section, we give the proofs of our main results.
	\begin{proof}[Proof of Theorem \ref{th2.1}] 	
		First of all we suppose that $ s_j\in\mathbb{C} $, $ j=1, 2, 3, 4 $. By the
		assumption of the theorem, $ f(z) $ and $ \mathcal{L}_c(f)=c_1f(z+c)+c_0f(z) $
		share $ s_3 $, $ s_4 $ $ CM $, hence we must have \bea\label{e3.1}
		\frac{\left(f-s_3\right)\left(\mathcal{L}_c(f)-s_4\right)}{\left(f-s_4\right)\left(\mathcal{L}_c(f)-s_3\right)}=e^{h(z)},
		\eea where $ h(z) $ is an entire function with $ \rho(h)<1 $ by Lemma \ref{lem3.3}. In view of Lemma \ref{lem3.4}, we obtain \beas
		T\left(r,e^h\right)=S(r,f).  \eeas\par Therefore with the help of Lemma
		\ref{lem3.2}, we obtained \beas T(r,\mathcal{L}_c(f))=T(r,f)+S(r,f). \eeas
		\par Next we suppose that $ z_0\in\ol E_{k)}(s_1,f)\cup\ol E_{k)}(s_2,f) $. Then from	(\ref{e3.1}), one may easily deduce that $ e^{h(z_0)}=1 $. For the sake of
		convenience, we set $ \gamma :=e^{h(z)} $ and \beas  S(r)
		:=S(r,\mathcal{L}(f))=S(r,f).\eeas \par We now split the
		problem into two cases.\par 
		\noindent{\bf Case 1.} Let $ e^{h(z)}\neq 1 $.\par  A simple computation shows that
		that \bea\label{e3.2} \ol N_{k_1)} \left(r,\frac{1}{f-s_1}\right)&\leq&
		N\left(r,\frac{1}{\gamma -1}\right)\leq T(r,\gamma)+O(1)\leq S(r) \eea and
		\bea\label{e3.3} \ol N_{k_2)} \left(r,\frac{1}{f-s_2}\right)&\leq&
		N\left(r,\frac{1}{\gamma -1}\right)\leq T(r,\gamma)+O(1)\leq S(r). \eea \par
		Without loss of generality, we may assume that $ s_3 $, $ s_4
		\in\mathcal{S}(f)\setminus\{0\}$. By Lemma \ref{lem3.5}, for \beas
		\epsilon\in\left(0,\frac{1}{3}\left(\Theta(0;f)+\Theta(\infty;f)\right)-\frac{1}{k_1+1}-\frac{1}{k_2+1}\right),
		\eeas we obtain \bea\label{e3.4} (4-\epsilon) T(r,f)\leq \ol N(r,f)+\ol
		N\left(r,\frac{1}{f}\right)+\sum_{j=1}^{4}\ol
		N\left(r,\frac{1}{f-s_j}\right)+S(r,f). \eea\par With the help of (\ref{e3.2})
		and (\ref{e3.3}), it follows from (\ref{e3.4}) that \beas (2-\epsilon) T(r,f)\leq
		\ol N(r,f)+\ol N\left(r,\frac{1}{f}\right)+\sum_{j=1}^{2}\ol
		N_{(k_j+1}\left(r,\frac{1}{f-s_j}\right)+S(r,f) \eeas which gives \beas
		\Theta(0;f)+\Theta(\infty;f)\leq\frac{1}{k_1+1}+\frac{1}{k_2+1} \eeas which
		contradicts \beas \Theta(0;f)+\Theta(\infty;f)>\frac{1}{k_1+1}+\frac{1}{k_2+1}.
		\eeas
		\noindent{\bf Case 2.} Therefore, we have $ e^{h(z)}\equiv 1 $ and hence \beas
		\frac{(f-s_3)(\mathcal{L}_c(f)-s_4)}{(f-s_4)(\mathcal{L}_c(f)-s_3)}=1, \eeas
		on simplification, we obtain $ \mathcal{L}_c(f)\equiv f(z) $, for all $
		z\in\mathbb{C} $.\par We are now to find the class of all the
		meromorphic functions satisfying the difference equation $ \mathcal{L}_c(f)\equiv f $. By assumption of the result, and using {Lemma \ref{lem3.6}}, it is easy to see that $ f $
		is not a rational function. Therefore $ f(z) $ must be a transcendental	meromorphic function. \par We also see that $ f(z) $ and $ f(z+c) $ are related by \bea\label{e3.5}
		f(z+c)=\left(\frac{1-c_0}{c_1}\right)f(z). \eea \par Let $ f_1(z) $ and $ f_2(z)
		$ be two solutions of (\ref{e3.5}) (see \cite{Aha & RM & 2019} for more details). Then it is easy to see that\bea\label{e3.6}
		f_1(z+c)=\left(\frac{1-c_0}{c_1}\right)f_1(z)\\\label{e3.7}
		f_2(z+c)=\left(\frac{1-c_0}{c_1}\right)f_2(z). \eea \par We set $
		h(z):=f_1(z)/f_2(z) $. Then in view of (\ref{e3.6}) and (\ref{e3.7}), we
		obtain \beas
		h(z+c)=\frac{f_1(z+c)}{f_2(z+c)}=\displaystyle\frac{\displaystyle\frac{1-c_0}{c_1}f_1(z)}{\displaystyle\frac{1-c_0}{c_1}f_2(z)}=\frac{f_1(z)}{f_2(z)}=h(z),\eeas
		for all $ z\in\mathbb{C} $. Therefore, it is easy to verify that $$f_2(z)=\displaystyle\left(\frac{1-c_0}{c_1}\right)^{\displaystyle\frac{z}{c}}g_2(z),$$ where $ g_2(z) $ is a meromorphic function with $ g_2(z+c)=g_2(z) $, is a	solution of (\ref{e3.5}). Hence, it is also easy to verify that $ f_1(z)=f_2(z)h(z) $, a solution of (\ref{e3.5}). Thus the linear combination  \beas a_1f_1(z)+a_2f_2(z)&=&
		\displaystyle\left(\frac{1-c_0}{c_1}\right)^{\displaystyle\frac{z}{c}}\left(a_1h(z)+a_2\right)g_2(z)\\&=&\displaystyle\left(\frac{1-c_0}{c_1}\right)^{\displaystyle\frac{z}{c}}\sigma
		(z),\eeas where $ \sigma(z)=\left(a_1h(z)+a_2\right)g_2(z) $ is such that $
		\sigma(z+c)=\sigma(z) $, for all $ z\in\mathbb{C} $, is the general solution of
		(\ref{e3.5}). Hence, the precise form of $ f(z) $ is \beas 
		f(z)=\displaystyle\left(\frac{1-c_0}{c_1}\right)^{\displaystyle\frac{z}{c}}g(z),
		\eeas where $ g(z) $ is a meromorphic function with $ g(z+c)=g(z) $, for all $ z\in\mathbb{C} $.\vspace{1mm} \par This completes the proof.
	\end{proof}
	\begin{proof}[Proof of Theorem \ref{th2.3}] 
		Let us suppose that $ g(z) $ is the canonical product of the poles of $ f $. Then by Lemma \ref{lem3.4}, we obtained \bea\label{e4.8}
		m\left(r,\frac{g(z+c)}{g(z)}\right)=S(r,f)=S(r,f). \eea\par Since $ \Theta(\infty;f)=1 $, hence it is easy to see that \beas \limsup_{r\rightarrow +\infty}\frac{\ol
			N(r,f)}{T(r,f)}=0. \eeas\par Therefore it follows from (\ref{e4.8}) that\bea\label{e4.9} T\left(r,\frac{g(z+c)}{g(z)}\right)=S(r,f). \eea\par Since $ f
		$ and $ \mathcal{L}_c(f) $ share $ s_3 $ $ CM $, by
		Lemma \ref{lem3.3}, we obtain
		\bea\label{e4.10}
		\frac{\mathcal{L}_c(f)-s_3}{f-s_3}=e^{\mathcal{H}(z)}\frac{g(z)}{g(z+c)}, \eea
		where $ \mathcal{H}(z) $ is an entire function, with $ \rho(\mathcal{H})<1 $. By Lemma \ref{lem3.4}, we also obtain
		\bea\label{e4.11} T\left(r,e^{\mathcal{H}(z)}\frac{g(z)}{g(z+c)}\right)=S(r,f).
		\eea\par Therefore, by Lemma \ref{lem3.2} and (\ref{e4.11}), a simple computation shows that $
		T(r,\mathcal{L}_c(f))=T(r,f)+S(r,f). $ For the sake convenience, we set  \beas
		\beta:=e^{\mathcal{H}(z)}\frac{g(z)}{g(z+c)}\;\;\text{and}\;\;
		S(r):=S(r,\mathcal{L}_c(f))=S(r,f). \eeas\par If $ \mathcal{L}_c(f)\not\equiv
		f(z) $. i.e., if $ \beta\neq 1 $, then with the help of (\ref{e4.10}) and from the assumption, we obtain \bea\label{e4.12} \ol
		N_{1)}\left(r,\frac{1}{f-s_1}\right)&\leq& N\left(r,\frac{1}{\beta
			-1}\right)\leq T(r,\beta)+O(1)= S(r). \eea and \bea\label{e4.13} \ol
		N_{1)}\left(r,\frac{1}{f-s_2}\right)&\leq& N\left(r,\frac{1}{\beta
			-1}\right)\leq T(r,\beta)+O(1)= S(r). \eea By Lemma \ref{lem3.7}, and using
		(\ref{e4.12}) and (\ref{e4.13}), we easily obtain \bea\label{e4.14} \ol
		N_{1)}\left(r,\frac{1}{\mathcal{L}_c(f)-s_1}\right)\leq \ol
		N_{1)}\left(r,\frac{1}{f-s_1}\right)+S(r)=S(r). \eea and \bea\label{e4.15} \ol
		N_{1)}\left(r,\frac{1}{\mathcal{L}_c(f)-s_2}\right)\leq \ol
		N_{1)}\left(r,\frac{1}{f-s_1}\right)+S(r)=S(r). \eea\par On the other hand, it follows from
		(\ref{e4.10}) that \bea\label{e4.16} \mathcal{L}_c(f)-s_1&=&(s_3-s_1)+\beta\;
		(f-s_3)\\&=&\beta\;\left(f-\frac{s_1+(\beta -1)s_3}{\beta}\right)\nonumber. \eea
		\par Similarly, we obtain \bea\label{e4.17}
		\mathcal{L}_c(f)-s_2=\beta\;\left(f-\frac{s_2+(\beta
			-1)s_3}{\beta}\right).\eea\par It is easy to see that \bea\label{e4.18}
		N\left(r,\frac{1}{\mathcal{L}_c(f)-s_1}\right)=N\left(r,\frac{1}{f-\frac{s_1+(\beta
				-1)s_3}{\beta}}\right)+S(r). \eea and \bea\label{e4.19}
		N\left(r,\frac{1}{\mathcal{L}_c(f)-s_2}\right)=N\left(r,\frac{1}{f-\frac{s_2+(\beta
				-1)s_3}{\beta}}\right)+S(r). \eea\par Now our aim is to deal with the following three cases.\vphantom{1mm}\\ 
	\noindent{\bfseries{Case 1.}} Suppose that $ \left({\left((\beta-1)s_3+s_1\right)}/{\beta}\right)\neq s_2 $.\par  Since $ \left({\left((\beta
			-1)s_3+s_1\right)}/{\beta}\right)\neq s_1 $ and $ \Theta(\infty;f)=1 $, hence by Lemma
		\ref{lem3.5} for $ \epsilon\in \left(0,{1}/{2}\right) $, it
		follows from (\ref{e4.10}), (\ref{e4.12}), (\ref{e4.13}), (\ref{e4.14}) and
		(\ref{e4.18}) that \bea  && (2-\epsilon) T(r,f)\\&\leq& \ol N(r,f)+\ol
		N\left(r,\frac{1}{f}\right)+\ol N\left(r,\frac{1}{f-s_2}\right)+\ol
		N\left(r,\frac{1}{f-\frac{(\beta -1)s_3+s_1}{\beta}}\right)\nonumber\\&\leq& \ol
		N_{(2}\left(r,\frac{1}{f-s_1}\right)+\ol
		N_{(2}\left(r,\frac{1}{f-s_2}\right)+\ol
		N_{(2}\left(r,\frac{1}{\mathcal{L}_c(f)-s_1}\right)\nonumber\\&\leq&\frac{1}{2}
		T(r,f)+\frac{1}{2} T(r,f)+\frac{1}{2} T(r,f)+S(r)\nonumber\\&=&\frac{3}{2}
		T(r,f)+S(r,f)\nonumber, \eea which is a contradiction.\vspace{1mm} \\
		\noindent{\bfseries{Case 2.}} Suppose that $ \left({\left((\beta
			-1)s_3+s_2\right)}/{\beta}\right)\neq s_1 $.\par  Since $ \left({\left((\beta
			-1)s_3+s_2\right)}/{\beta}\right)\neq s_2 $ and $ \Theta(\infty;f)=1 $, hence proceeding
		exactly same way as done {Case 1}, we arrive at a contradiction.\par Therefore,
		we must have $ \mathcal{L}_c(f)\equiv f $, and hence following the proof of {Theorem \ref{th2.1}}, we obtain the precise form of the function. \vspace{1mm}\\ 
\noindent{\bfseries{Case 3.}} Suppose that \beas \frac{(\beta
			-1)s_3+s_2}{\beta}=s_1 \eeas and \beas \frac{(\beta -1)s_3+s_1}{\beta}=s_2
		.\eeas\par An elementary calculation shows that $ \beta =-1 $, so that $	2s_3=s_1+s_2 $. Thus from (\ref{e4.10}), we have $ \mathcal{L}_c(f)\equiv -f(z)+2s_3 $ and by the same argument used in the previous cases, it is not hard to show that $ f(z) $ will take the form \beas
		f(z)=\left(\frac{-1-c_0}{c_1}\right)^{\displaystyle\frac{z}{c}}g(z)+2s_3,\;\;\text{for all}\; z\in\mathbb{C},  \eeas where $ g(z) $ is a meromorphic function with period $ c $. This completes the proof. 
	\end{proof}
	
	
	\section{Concluding remarks and open question}
	\par Let us suppose that $ \mathcal{L}_c(f)\equiv f $, where $ f $ is a non-constant meromorphic functions. Since $ f $ can not be rational function (see \cite{Aha & RM & 2019} for detailed information), hence $ f $ must be transcendental and hence $ f(z) $ takes the precise form \beas 
	f(z)=\left(\frac{1-c_0}{c_1}\right)^{\displaystyle\frac{z}{c}}g(z), \eeas where $ g(z) $ is a meromorphic periodic function $ c $. We can write $ f(z)=\alpha^{\frac{z}{c}}g(z), $ where $ \alpha $ is a root of the equation $	c_1z+c_0=1 $.\vspace{1mm} 
	
	\par For further generalization, we define $
	\mathcal{L}_c^n(f):=c_nf(z+nc)+\cdots+c_1f(z+c)+c_0f(z) $ (see \cite{Ban & Aha & JCMA-2020} for details), where $ c_n(\neq 0), c_1, c_0\in\mathbb{C} $. For particular values of the constants $ c_j=(-1)^{n-j}\binom nj $ for $ j=0, 1, \ldots, n $, it is easy to see that $\mathcal{L}_c^n(f)=\Delta_{c}^n(f). $	One can check that $ f(z)=2^{^{\frac{z}{c}}}g(z) $, where $ g $ is a
	meromorphic function of period $ c $, solves the difference equation $ \Delta_{c}^n(f)\equiv f $. 	We are mainly interested to find the precise form of the function $ f $ when  it solves the difference equation $ \mathcal{L}_c^n(f)\equiv f $. However, we conjecture the following.\\
	
	\noindent{\bf Conjecture:} 	Let $ f $ be a meromorphic function such that $ \mathcal{L}_c^n(f)\equiv f $,
	then $ f $ assumes the form $$ f(z)=\alpha_n^{{z}/{c}}g_n(z)+\cdots+\alpha_1^{{z}/{c}}g_1(z), $$
	where $ g_j $ $ (j=1, 2, \ldots, n) $ are meromorphic functions of period $ c $, and $ \alpha_j $ $ (j=1, 2, \ldots, n) $ are roots of the equation $ c_nz^n+\cdots+c_1z+c_0=1 $. \vspace{1mm}

	\par Based on the above discussions, we now pose the following question for future investigations on the main results of the paper.
\begin{ques}
Keeping all other conditions intact, for a meromorphic function $ f $, is it possible to get a corresponding result of {Theorems \ref{th2.1},
\ref{th2.2}} and {\ref{th2.3}}\; for $ \mathcal{L}_c^n(f) $?
\end{ques}

\end{document}